\newcommand\cyr
\renewcommand\rmdefault{wncyr}
\renewcommand\sfdefault{wncyss}
\renewcommand\encodingdefault{OT2}
\DeclareTextFontCommand{\textcyr}{\cyr}
\def\cprime{\char"7E }
\newlength{\tmpl}
\newcommand{\dd}[1]{d#1}
\newcommand\tikzsetnextfilename[1]{}
\numberwithin{equation}{section}
\newcommand{\onetwo}[1]{\hat{1}_2^{#1}} \newcommand{\onethree}[1]{\hat{1}_3^{#1}}
\newcommand{\pispecial}[1]{\nu_{0#1}}
\DeclareMathOperator{\Tr}{\mathrm{Tr}}
\newcommand{\dottie}{ \text{\itshape\textturnm}}
\let\Re\undefined
\let\Im\undefined
\DeclareMathOperator{\Re}{\mathrm{Re}}
\DeclareMathOperator{\Im}{\mathrm{Im}}
\DeclareMathOperator{\arccot}{arccot}
\DeclareMathOperator{\atanh}{\mathrm{atanh}}
\newcommand{\harpleftsign}{\scriptstyle\leftharpoonup}
\newcommand{\harpleft}[2]{  \ifx\displaystyle#1\doalign{$\harpleftsign$}{#1#2}\fi
  \ifx\textstyle#1\doalign{$\harpleftsign$}{#1#2}\fi
  \ifx\scriptstyle#1\doalign{\scalebox{.6}[.9]{$\harpleftsign$}}{#1#2}\fi
  \ifx\scriptscriptstyle#1\doalign{\scalebox{.5}[.8]{$\harpleftsign$}}{#1#2}\fi
}
 \newcommand{\harprightsign}{\scriptstyle\rightharpoonup}
\newcommand{\harpright}[2]{  \ifx\displaystyle#1\doalign{$\harprightsign$}{#1#2}\fi
  \ifx\textstyle#1\doalign{$\harprightsign$}{#1#2}\fi
  \ifx\scriptstyle#1\doalign{\scalebox{.6}[.9]{$\harprightsign$}}{#1#2}\fi
  \ifx\scriptscriptstyle#1\doalign{\scalebox{.5}[.8]{$\harprightsign$}}{#1#2}\fi
}
\newcommand{\doalign}[2]{ {\vbox{\offinterlineskip\ialign{\hfil##\hfil\cr#1\cr$#2$\cr}}}}
\def\R{{\mathbb R}}
\def\C{{\mathbb C}}
\def\IC{{\mathbb C}}
\def\IR{{\mathbb R}}
\def\IZ{{\mathbb Z}}
\def\IN{{\mathbb N}}
\def\N{{\mathbb N}}
\def\<{\langle}
\def\>{\rangle}
\def\E{{\mathbb E}}
\def\X{{\mathnormal X}}
\def\c{\underline c}
\def\s{\tilde{s}}
\def\Y{{\mathnormal Y}}
\def\A{\mathcal{A}}
\newcommand{\abs}[1]{
  \left\lvert
    #1
  \right\rvert}
\newcommand{\bigabs}[1]{
  \bigl\lvert
    #1
  \bigr\rvert}
\DeclareMathOperator{\Var}{\mathrm{Var}} \DeclareMathOperator{\SG}{\mathfrak{S}} \DeclareMathOperator{\NC}{\mathit{NC}} \DeclareMathOperator{\SP}{\mathcal{P}}   \DeclareMathOperator{\Rtrans}{\mathcal{C}}
\newtheorem{th-def}{Theorem-Definition}[section]
\newtheorem{theo}{Theorem}[section]
\newtheorem{lemm}[theo]{Lemma}
\newtheorem{prop}[theo]{Proposition}
\newtheorem{cor}[theo]{Corollary}
\theoremstyle{definition}
\newtheorem{notation}[theo]{Notation}
\newtheorem{Rem}[theo]{Remark}
\def\cvput#1[#2]{\pnode(#1,1){#1} \pscircle*(#1,1){.1} \rput(#1,.5){$#2$}}
\title{The Free  tangent law}\author[Wiktor Ejsmont]{Wiktor Ejsmont}
\address[Wiktor Ejsmont]{ 
Instytut Matematyczny, Uniwersytet Wroc\l awski,\\ pl.\ Grunwaldzki 2/4, 50-384 Wroc\l aw, Poland}
\email{wiktor.ejsmont@gmail.com}
\author[Franz Lehner]{Franz Lehner}
\address[Franz Lehner]{Institut für Diskrete Mathematik,
Technische Universität Graz,
Steyrergasse 30, 8010 Graz, Austria }
\email{lehner@math.tugraz.at}
\subjclass[2010]{Primary: 46L54. Secondary: 11B68, 60F05.}
\thanks{Supported by the Austrian Federal Ministry of Education, Science and
  Research and the Polish Ministry of Science and Higher Education, grants
  N$^{\textrm{os}}$ PL 08/2016 and PL 06/2018 and Wiktor
Ejsmont was supported by the Narodowe Centrum Nauki grant N$^{\textrm{o}}$ 2018/29/B/HS4/01420}
\keywords{free infinite divisibility, central limit theorem,
   tangent numbers, Euler numbers, zigzag numbers, cotangent sums}
\begin{document}
\begin{abstract} 
Nevanlinna-Herglotz functions play a fundamental role for the study of
infinitely divisible distributions in free probability \cite{VoiculescuBercovici1992}. 
In the present paper we study the role of the tangent function,
which is a fundamental Herglotz-Nevanlinna function
\cite{Gesztesy,Donoghue:1974:monotone,Steinmetz},
and related functions in free probability.
To be specific, we show that the function
$$
\frac{\tan z}{1-x\tan z}
$$
of Carlitz and Scoville
\cite[(1.6)]{CarlitzScoville:1972} 
describes the limit distribution of
sums of free commutators and anticommutators and 
thus the free cumulants are given by the Euler zigzag numbers.

\end{abstract}

\date{
  \today}

\setlength{\parindent}{0pt}

\maketitle

\section{Introduction}
Nevanlinna or Herglotz functions are functions analytic in the upper half plane having non-negative imaginary part. This class has been thoroughly
studied during the last century and has proven very useful in many applications.
One of the fundamental examples  of Nevanlinna functions is the tangent function, see
\cite{Arlinskii,Gesztesy,Donoghue:1974:monotone,Steinmetz}. 
On the other hand it was shown by by Bercovici 
and Voiculescu \cite{VoiculescuBercovici1992}
that Nevanlinna functions characterize freely infinitely divisible
distributions. Such distributions naturally appear in free limit theorems
and 
in the present paper we show that the tangent function
appears in  a limit theorem for weighted  sums of free
commutators and  anticommutators. More precisely,
the family of functions
$$\frac{\tan z}{1-x\tan z}$$
arises,
which was studied by Carlitz and Scoville
\cite[(1.6)]{CarlitzScoville:1972} in connection with the
combinorics of tangent numbers;
in particular we recover  the   tangent function for $x=0$. 

In recent years a number of papers have investigated limit
theorems for the free convolution of probability measures  defined
by  Voiculescu \cite{Voiculescu:1986,Voiculescu:1991,VoiculescuDykemaNica:1992}.
The key concept of this definition is the notion of noncommutative
free independence, or freeness for short.
As in classical probability where the concept of independence gives rise to classical convolution, the concept of freeness leads to another operation on the
measures on the real line called free convolution. 
Many classical results in the theory of addition of independent random variables have their counterpart in this new
theory. 
For example the free analogue of the central limit theorem asserts 
that the distribution of
$$\frac{\X_1+\dots+\X_n}{\sqrt{n}},$$
for a given family of free identically distributed random variables converges 
in distribution to the normal law semicircle law as $n$ goes to infinity.
More general central limit theorems were proved by Speicher~\cite{Speicher:1992:central} by combinatorial means and provide the starting point
for the present paper.
We study limit theorems for sums with correlated entries,
more precisely, for
quadratic forms in free random variables.
In particular, we can explicitly compute the limit distribution
$\mu$ of the quadratic form
\begin{equation}
\label{eq:sumcommanticomm}
 \frac{1}{n} \sum_{k<l} a(X_kX_l+X_lX_k) + bi(X_kX_l-X_lX_k)
\end{equation}
where $a^2+b^2=1$ and $b\neq 0$,
and its   $R$-transform  turns out to be the elementary function
$$
R_{\mu}(z)=\frac{\tan(bz)}{b-a\tan(bz)}.
$$
This is the generating function
of the higher order tangent  numbers of
Carlitz and Scoville \cite{CarlitzScoville:1972}
which arise in connection with the enumeration of certain permutations. 
The central limit theorem for the mixed sum
of commutators and  anti-commutators  \eqref{eq:sumcommanticomm}  
will follow from a general  limit theorem for arbitrary quadratic forms.
The respective limit laws are infinitely divisible
and we call them the  \emph{free tangent law} and the \emph{free zigzag law}
according to the combinatorial interpretation of their cumulants.
In addition we indicate random matrix models for these limits.
The classical version of this limit theorem features
the $\chi^2$-distribution since  commutators trivially vanish
in classical probability.

\section{Preliminaries}
\label{sec:prelim}

\subsection{Basic Notation and Terminology}
A tracial noncommutative probability space is a pair $(\mathcal{A},\tau)$
where $\mathcal{A}$ is a von Neumann algebra, and  $\tau:\mathcal{A} \to
\mathrm{C}$ is a normal, faithful, tracial state, i.e., $\tau$ is linear and
continuous in the weak* topology, $\tau(\X \Y)=\tau(\Y \X)$,  $\tau(I)=1$,
$\tau(\X\X^*)\geq 0$ and $\tau(\X \X^{*}) = 0$ implies $\X = 0$ for all $\X,\Y
\in \mathcal{A}$.
For example, the noncommutative analog of a finite probability space
is the algebra of
complex $N\times N$ matrices $M_N(\IC)$. The unique tracial state is
the normalized trace $\tau_N(A)=\frac{1}{N}\Tr(A)=\frac{1}{N}\sum A_{ii}$.

The elements $\X\in{\mathcal{A}}$
are called (noncommutative) random variables; in the present paper
all random variables are assumed to be self-adjoint.
Given a noncommutative random variable $\X\in{\mathcal{A}}_{sa}$,
the spectral theorem provides a unique probability measure $\mu_X$ on 
$\R$ which encodes the distribution of $\X$ in the state $\tau$,
i.e., 
 $\tau( f(\X))=\int_\R f(\lambda)\,\dd{\mu_\X(\lambda)}$
for any bounded Borel function $f$ on $\R$.

\subsection{Free Independence}
A family of von Neumann subalgebras $\left(\mathcal{A}_i\right)_{i\in I}$ 
of $\mathcal{A}$ 
is called \emph{free}
 if $\tau(\X_{1} \dots \X_{n} ) = 0$ whenever $\tau(\X_{j} ) = 0$ for all
$j = 1,\dots, n$ and $\X_{j} \in \mathcal{A}_{i(j)}$ for some indices $i(1)\neq i(2)\neq \dots \neq i(n)$.
Random variables $\X_{1},\dots ,\X_{n} $  are freely independent (free) if the
subalgebras they generate are free. 
Free random variables can be constructed using the reduced free product
of von Neumann algebras \cite{Voiculescu:1985}.
For more details about free convolutions and free probability theory 
the reader can consult the standard references
\cite{VoiculescuDykemaNica:1992,NicaSpeicher:2006,MingoSpeicher:2017}.

\subsection{Free Convolution and the Cauchy-Stieltjes Transform}
It can be shown that the joint distribution of free random variables $X_i$
is uniquely determined by the distributions of the individual random variables
$X_i$ and therefore the operation of \emph{free convolution} is well defined:
Let $\mu$ and $\nu$ be probability measures on $\R$, and
$\X,\Y$ self-adjoint free random variables with respective distributions
$\mu$ and $\nu$.
The distribution of $\X+\Y$ is called the free additive convolution of $\mu$
and $\nu$ and is denoted by $\mu \boxplus \nu$.
The analytic approach to free convolution is based on the Cauchy transform
\begin{align}
\label{eq:hm:ball}
G_\mu(z)=\int_{\R}\frac{1}{z-y}\,\dd{\mu(y)}
\end{align}
of a probability measure $\mu$. The Cauchy transform is analytic on the upper half
plane $\IC^+=\{x+iy|x,y\in \R, y>0\}$ 
and takes values in the closed lower half plane
 $\IC^-\cup\IR$. For measures with compact support the Cauchy transform is analytic at infinity
and related to the moment generating function $M_{\X}$ as follows:
\begin{align}\label{mgf}
M_{\X}(z)=\sum_{n=0}^{\infty}\,\tau(\X^n)\,z^n = \frac{1}{z}\,G_\X (1/z).
\end{align}

Moreover the Cauchy transform has an inverse in some neighbourhood of infinity
which has the form
$$
G_\mu^{-1}(z) = \frac{1}{z} + R_\mu(z),
$$
where $R_\mu(z)$ is analytic in a neighbourhood of zero and is called 
\emph{$R$-transform}.
The coefficients of its series expansion
\begin{align} \label{rtr}
R_{\X}(z)=\sum_{n=0}^{\infty}\,K_{n+1}(\X)z^n
\end{align} 
are called \emph{free cumulants} of the random variable $\X$, see
Section~\ref{ssec:freecumulants} below.
The $R$-transform linearizes free convolution 
\begin{align} \label{freeconv}
  R_{\mu\boxplus\nu}(z)=R_\mu(z)+R_\nu(z)
  ,
\end{align}
see~\cite{Voiculescu:1986} and is the main tool for computation,
but for combinatorial purposes it will be convenient to consider the
shift $\Rtrans_{X}(z):=zR_{X}(z)$, which is called the
\emph{free cumulant transform} or \emph{free cumulant generating function}.

In order to treat measures with noncompact support, 
it is convenient to reformulate the identities in terms
of the reciprocal Cauchy transform $F_\mu(z)=1/G_\mu(z)$ 
\cite{BercoviciVoiculescu:1993:unbounded}.
This function has an analytic right compositional 
inverse $F_\mu^{-1}$ in a region
$$
\Gamma_{\eta,M}  = \{ z\in \IC \mid \abs{\Re z} < \eta\Im z, \text{ } \Im z > M\}
;
$$
the \emph{Voiculescu transform} is defined as the function
$$
\phi_\mu(z) = F_\mu^{-1}(z) - z
$$
which turns out to be $\phi_\mu(z) = R_\mu(1/z)$.

\subsection{Free infinite divisibility}
\label{ssec:FID}
In analogy with classical probability,
a probability measure $\mu$ on $\R$ is said to be
\emph{freely infinitely divisible} (or FID for short)
if for each $n \in \{1, 2, 3, \dots \}$ there exists a probability measure
$\mu_n$ such that
$\mu= \mu_n\boxplus\mu_n\boxplus\dots\boxplus\mu_n
$ ($n$-fold free convolution).

Free infinite divisibility of a measure $\mu$ is characterized
by the property that its Voiculescu transform has
a Nevanlinna-Pick representation
\cite{BercoviciVoiculescu:1993:unbounded}
\begin{equation}
  \phi_{\mu}(z)=
  \gamma+\int_{\R}\frac{1+xz}{z-x}\,\dd\rho({x})
  =
  \gamma+\int_{\R}\left(\frac{1}{z-x}+\frac{x}{1+x^2}\right)(1+x^2)\,\dd\rho({x})
\end{equation}
for some $\gamma \in \R$ and some nonnegative finite measure $\rho$.

We recall a general method to compute L\'evy measures from \cite{ArizmendiHasebe:2013}.
In terms of the free cumulant transform the L\'evy-Khintchine representation takes
the form \cite{BarndorffnielsenThorbjornsen:2006}
\begin{equation}\label{eq:LevyKhintchine}
\Rtrans_{\mu}(z)=cz+az^2+\int_{\R}\left(\frac{1}{1-xz}-1-xz\mathbf{1}_{\{\abs{x}<1\}}(x)\right)\dd{\nu(x)}
\end{equation}
for some $c \in \R$, $a \geq 0$ and a nonnegative measure $\nu$ satisfying
$\nu(\{0\}) = 0$ and $\int_{\R}\min\{1,x^2\} \dd{\nu(x)}<\infty.$ 
The triplet $(c,a,\nu)$ is called the \emph{free characteristic triplet}, $a$
is called the
\emph{semicircular
component} and $\nu$ is called the \emph{free L\'evy measure} of $\mu$.
The measure $\rho$ can be calculated using the Stieltjes inversion formula
\begin{equation*}
\int_{u}^v(1+x^2)\,\dd\rho({x})=-\frac{1}{\pi}\lim_{\epsilon \to 0^+}\int_{u}^v\Im\phi_\mu(x+i\epsilon)\,\dd{x}
\end{equation*}
for all points of continuity $u,v$ of $\rho.$ Considering the relation $R_{\mu}(z)=\phi(\frac{1}{z})$ and \eqref{eq:LevyKhintchine} we obtain $\frac{1+x^2}{x^2}\rho|_{\R\setminus \{0\}}=\nu|_{\R\setminus \{0\}}$ and $\rho(\{0\})=a.$
In particular, if the
function $-\frac{1}{\pi} \Im \phi_\mu(x+i\epsilon)$ converges uniformly to a continuous function $f_\mu(x)$ as $\epsilon \to 0^+$ on an interval $[u, v]$, then $\rho$ is absolutely continuous in $[u, v]$ with density $\frac{1}{1+x^2}f_\mu(x)$.
Hence, $\nu$ is also absolutely continuous in $[u, v]$ with density
$\frac{1+x^2}{x^2}f_\mu(x)$.
Regarding atoms, their mass is given by
\begin{equation}\label{eq:atoms}
\nu(\{x\}) =\frac{ 1}
{x^2} \lim_{\epsilon \to 0^+} i\epsilon \phi_\mu(x+i\epsilon), \quad x\in \R\setminus \{0\}.
\end{equation}

\subsection{Wigner semicircle law}
The Wigner semicircle law  has density
\begin{equation}
  \label{eq:semicirclelaw}
  \dd{\mu(x)} = \frac{1}{2\pi}
  \sqrt{4-x^2}\,\dd{x}
\end{equation}
on $-2 \leq x \leq 2 $. Its 
Cauchy-Stieltjes transform
is given by the formula
\begin{equation}
G_{\mu}(z)=\frac{z  -\sqrt{z ^2 - 4}}{2} , \label{eq:GtransformataMixner}
\end{equation}
where  $|z|$ is big enough and where the branch of the analytic square root is determined by the condition
that $\Im(z)>0\Rightarrow \Im(G_\mu(z))\leqslant 0$ (see \cite{SaitohYoshida:2001}). 

A non-commutative random variable $\X$ distributed according to the semicircle law
is called \emph{semicircular} or \emph{free gaussian} random variable.
The reason for the latter is the fact that its free cumulants $K_r=0$ for $r>2$
and it appears in the free version of the central limit theorem.

\subsection{Even elements}
 We call an element $\X\in \mathcal{A}$ \emph{even} if  all its odd moments vanish, i.e., $\tau(\X^{2i+1})=0$ for all $i\geq 0.$ 
It is immediate that the vanishing of all odd moments is
equivalent to the vanishing of all odd cumulants, i.e., $K_{2i+1}(\X)=0$  
and thus the even cumulants contain the complete information about the distribution of an even element. 

\subsection{Convergence in distribution }
\label{sssec:CLT}
In noncommutative probability
we say that a sequence $X_n$ of random variables \emph{converges in
distribution} towards $X$ as $n\to \infty$, denoted by
 $$\X_n
  \xrightarrow{d} X$$
  if we have for all $m\in\N$
   $$\lim_{n\to \infty}\tau(\X_n^m)=\tau( X^m)\text{ or equivalently}
   \lim_{n\to \infty}K_m(\X_n)=K_m(X)
   .$$

\subsection{Random matrices}
\label{ssec:randommatrix}
The semicircle law arises also as the asymptotic spectral distribution of
certain random matrices.
An $N\times M$ 
\textsl{complex Gaussian random matrix} is a  matrix  $X=[x_{i,j}]_{i,j=1}^{N\times M}$
whose entries form an i.i.d.\  complex Gaussian family with mean zero and
variance $\E(\abs{x_{i,j}}^2)=\frac{1}{N}$, i.e.,
the real parts $\Re x_{ij}$ and the imaginary parts $\Im x_{ij}$ together
form an i.i.d.\ family of $N(0,\frac{1}{2N})$ random variables.

An  $N\times N$ \textsl{GUE} random matrix is a matrix 
$Y_N=[y_{ij}]_{i,j=1}^{N\times N}$ of the form $Y_N=\frac{X_N+X_N^\ast}{\sqrt{2}}$
where $X_N$  is an  $N\times N$  complex Gaussian random matrix,
i.e., the family $\{y_{ii} \mid 1\leq i\leq N\}\cup \{\Re y_{ij} \mid 1\leq
i<j\leq N\}\cup\{\Im y_{ij} \mid 1\leq i<j\leq N\}$
is an independent family of real gaussian random variables with variance
$\Var y_{ii}=1/N$ and $\Var y_{ij}=\frac{1}{2N}$ for $i<j$.
It is well known that the moments  spectral distribution
a.s.\ converge to the moments of the standard Wigner semicircle law
\eqref{eq:semicirclelaw} 
$$
\lim_{N\to\infty} \tau_N(Y_N^m) = \frac{1}{2\pi}\int_{-2}^2 x^m\sqrt{4-x^2}\,\dd{x} =
\begin{cases}
  \frac{1}{n+1}\binom{2n}{n}&\text{if $m=2n$ is even,}\\
  0 &\text{otherwise,}
\end{cases}
$$
with respect to the normalized trace $\tau_N$.
In the language of section~\ref{sssec:CLT} this means that
$Y_N$ converges in distribution to a semicircular element with respect to the
expectation functional $\tau_N$.

\subsection{Convergence in eigenvalues}
Recently the concept of convergence with respect to the nonnormalized trace 
turned out to be useful 
for the study of the fine structure of random matrices
\cite{ColinsHasebeSakuma2018}. We say that a sequence of $N\times N$ 
deterministic 
matrices $A_N$ has limit distribution $\mu$ with respect to the
nonnormalized trace if the moments satisfy
$$
\lim_{N\to\infty}\Tr(A_N^m)=\int t^m \dd{\mu(t)}
$$
for every $m\in\IN$. The limit measure $\mu$ is not necessarily a probability measure,
but we explicitly assume that all moments of $\mu$ are finite
and 
thus the limit with respect to the normalized trace $\tau_N$ is zero.
Moreover the limit distribution is discrete
\cite[Proposition~2.10]{ColinsHasebeSakuma2018}
and under certain conditions the eigenvalues converge pointwise
\cite[Proposition~2.8]{ColinsHasebeSakuma2018}.

\subsection{Noncrossing Partitions}
We recall some facts about noncrossing partitions. For details and proofs see
the lecture notes \cite[Lecture~9]{NicaSpeicher:2006}.
Let $S\subseteq\N $ be a finite subset.
A partition of $S$ is a set of mutually disjoint subsets
(also called \emph{blocks}) $B_1,B_2,\dots,B_k\subseteq S$ 
whose union is $S$. Any partition $\pi$ defines an equivalence relation on $S$,
denoted by $\sim_\pi$, such that the equivalence classes are the blocks $\pi$. 
That is, $i\sim_\pi j$ if $i$ and $j$ belong to the same block of $\pi$.
A partition $\pi$ is called \emph{noncrossing} 
if different blocks do not interlace, i.e., there is no quadruple of
elements $i<j<k<l$ such that $i\sim_\pi k$ and $j\sim_\pi l$ 
but $i\not\sim_\pi j$. 

The set of non-crossing partitions of $S$ is denoted by $\NC(S)$,
in the case where $S=[n]:=\{1, \dots , n\}$ we write
$\NC(n):=\NC([n])$. 
$\NC(n)$ is a lattice under refinement order, where we declare $\pi\leq \rho$ if
every block of $\pi$ is contained in a block of $\rho$.
The subclass of noncrossing pair partitions (i.e., noncrossing complete matchings)
is denoted by $\NC_2(n)$.

The maximal element of $\NC(n)$ under this order is the partition consisting
of only one block and it is denoted by  $\hat{1}_{n}$.
On the other hand the minimal element $\hat{0}_n$ 
is the unique partition where every block is a singleton.
Sometimes it is convenient to visualize partitions as diagrams, for example
$\hat{1}_n=\makeatletter{}\begin{tikzpicture}[inner sep=0pt,scale=0.04]
\draw (2,0)--(2,7.5);
\draw (8,0)--(8,7.5);
\node at (18,3){$\cdots$};
\draw (26,0)--(26,7.5);
\draw (2,7.5)--(26,7.5);
\end{tikzpicture}
 $
and $\hat{0}_n=\makeatletter{}\begin{tikzpicture}[inner sep=0pt,scale=0.04]
\draw (2,0)--(2,4.5);
\draw (8,0)--(8,4.5);
\node at (18,2){$\cdots$};
\draw (26,0)--(26,4.5);
\draw (2,4.5)--(2,4.5);
\draw (8,4.5)--(8,4.5);
\draw (14,4.5)--(14,4.5);
\draw (20,4.5)--(20,4.5);
\draw (26,4.5)--(26,4.5);
\end{tikzpicture}
 $.

We will apply the product formula
\eqref{twr:produktargumentow} below only in the case
of pairwise products of random variables and in this case
two specific pair partitions and their complements will play a particularly important role,
namely the \emph{standard matching}
$\onetwo{n}=\makeatletter{}\begin{tikzpicture}[inner sep=0pt,scale=0.04]
\draw (2,0)--(2,4.5);
\draw (8,0)--(8,4.5);
\draw (14,0)--(14,4.5);
\draw (20,0)--(20,4.5);
\node at (30,2){$\cdots$};
\draw (38,0)--(38,4.5);
\draw (44,0)--(44,4.5);
\draw (2,4.5)--(8,4.5);
\draw (14,4.5)--(20,4.5);
\draw (38,4.5)--(44,4.5);
\end{tikzpicture}
 \in\NC(2n)$
and its shift $\pispecial{n}=\makeatletter{}\begin{tikzpicture}[inner sep=0pt,scale=0.04]
\draw (2,0)--(2,7.5);
\draw (8,0)--(8,4.5);
\draw (14,0)--(14,4.5);
\draw (20,0)--(20,4.5);
\draw (26,0)--(26,4.5);
\node at (36,2){$\cdots$};
\draw (44,0)--(44,4.5);
\draw (50,0)--(50,4.5);
\draw (56,0)--(56,7.5);
\draw (8,4.5)--(14,4.5);
\draw (20,4.5)--(26,4.5);
\draw (44,4.5)--(50,4.5);
\draw (2,7.5)--(56,7.5);
\end{tikzpicture}
 \in\NC(2n)$.

\subsection{Free Cumulants}
\label{ssec:freecumulants}
Given  a  noncommutative  probability space $(\A,\tau)$
 the free  cumulants are multilinear functionals $K_n : \mathcal{A}^n\to\IC$ 
defined  implicitly in terms of the mixed moments by the relation
\begin{align}
\tau(\X_{1}\X_{2}\dots \X_{n}) = \sum_{\pi \in \NC(n)}K_{\pi}(\X_{1},\X_{2},\dots ,\X_{n}),\label{eq:DefinicjaKumulant}
\end{align}
where 
\begin{align}
K_{\pi}(\X_{1},\X_{2},\dots ,\X_{n}):=\Pi_{B \in \pi}K_{\abs{B}}(\X_{i}:i \in B). \label{eq:DefinicjaProduktuKumulant}
\end{align}
 Sometimes we will abbreviate univariate cumulants as $K_{n}(\X)=K_{n}(\X,\dots ,\X )$.

Free cumulants provide a powerful technical tool to investigate
free random variables. 
This is due to the basic property of \emph{vanishing of mixed cumulants}. 
By this we mean the property that 
$$
K_n(X_1,X_2,\dots,X_n)=0
$$
for any family of random variables $X_1,X_2,\dots,X_n$ which can be partitioned
into two mutually free nontrivial subsets. 
For free sequences this can be reformulated as follows.
Let $(X_i)_{i\in\N}$ be a sequence of free random variables and
$h:[r]\to\N$ a map. We denote by $\ker h$ the set partition
which is induced by the equivalence relation 
$$
i\sim_{\ker h} j
\ 
\iff
\ 
h(i)=h(j)
.
$$
In this notation, vanishing of mixed cumulants implies that
\begin{equation}
  \label{eq:kerh>=pi}
  K_\pi(X_{h(1)},X_{h(2)},\dots,X_{h(r)})=0
  \text{ unless $\ker h\geq \pi$.}
\end{equation}

Our main technical tool is the free version,
due to Krawczyk and Speicher~\cite{KrawczykSpeicher:2000} (see also   \cite[Theorem 11.12]{NicaSpeicher:2006}), of the classical formula of 
James and Leonov/Shiryaev \cite{James:1958,LeonovShiryaev:1959} which
expresses cumulants of products in terms of individual cumulants.
\begin{theo}
\label{thm:krawczyk}
Let
$r,n \in \N$ and $ i_1 < i_2 < \dots < i_r = n$ be given and let
$$\rho=\{(1,2,\dots,i_1),(i_1+1,i_1+2,\dots,i_2),\dots,(i_{r-1}+1,i_{r-1}+2,\dots,i_r)\}\in \NC(n)$$ 
be the induced interval partition.
Consider now random variables $\X_1,\dots,\X_n\in\A$.
Then the free cumulants of the products can be expanded
as follows:
\begin{align} 
\label{twr:produktargumentow}
K_r(\X_1\dots \X_{i_1},\dots,\X_{i_{r-1}+1}\dots\X_n)=\sum_{\substack{\pi\in \NC(n) \\ \pi\vee\rho=\hat{1}_{n}} }K_\pi({\X_1,\dots,\X_n}). 
\end{align} 
\end{theo}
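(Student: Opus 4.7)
The plan is a direct Möbius-inversion argument in the lattice $\NC(r)$. Setting $Y_j := X_{i_{j-1}+1}\dotsm X_{i_j}$ (with the convention $i_0:=0$), the left-hand side is $K_r(Y_1,\dots,Y_r)$, and the usual Möbius inversion of the moment-cumulant formula in $\NC(r)$ gives
$$K_r(Y_1,\dots,Y_r)=\sum_{\sigma\in\NC(r)}\mu(\sigma,\hat 1_r)\,\tau_\sigma(Y_1,\dots,Y_r).$$

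I next invoke the natural order-preserving bijection $\sigma\mapsto\tilde\sigma$ from $\NC(r)$ onto the interval $[\rho,\hat 1_n]\subset\NC(n)$, obtained by substituting for each $j\in[r]$ the corresponding $\rho$-block $\{i_{j-1}+1,\dots,i_j\}$; this is an order-isomorphism between $\NC(r)$ and $[\rho,\hat 1_n]$. Since the blocks of $\tilde\sigma$ are unions of consecutive intervals of $\rho$, one has $\tau_\sigma(Y_1,\dots,Y_r)=\tau_{\tilde\sigma}(X_1,\dots,X_n)$. Expanding each $\tau_{\tilde\sigma}$ via the basic moment-cumulant formula $\tau_{\tilde\sigma}(X_1,\dots,X_n)=\sum_{\pi\in\NC(n),\,\pi\leq\tilde\sigma}K_\pi(X_1,\dots,X_n)$ and interchanging summations yields
$$K_r(Y_1,\dots,Y_r)=\sum_{\pi\in\NC(n)}K_\pi(X_1,\dots,X_n)\sum_{\substack{\sigma\in\NC(r)\\ \tilde\sigma\geq\pi}}\mu(\sigma,\hat 1_r).$$

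The remaining step is purely lattice-theoretic. Because $\tilde\sigma\geq\rho$ automatically, the condition $\tilde\sigma\geq\pi$ is equivalent to $\tilde\sigma\geq\pi\vee\rho$; but $\pi\vee\rho$ lies in $[\rho,\hat 1_n]$, hence equals $\tilde\tau$ for a unique $\tau\in\NC(r)$. Under the order-isomorphism this translates to $\sigma\geq\tau$, so the inner sum becomes $\sum_{\sigma\geq\tau}\mu(\sigma,\hat 1_r)=\delta_{\tau,\hat 1_r}$ by the defining relation of the Möbius function in $\NC(r)$, which collapses the outer sum to precisely those $\pi$ satisfying $\pi\vee\rho=\hat 1_n$, as claimed.

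The main obstacle I anticipate is the bookkeeping around the embedding $\sigma\mapsto\tilde\sigma$: one must verify carefully that it is an order-isomorphism of $\NC(r)$ onto $[\rho,\hat 1_n]\subset\NC(n)$ and that $\pi\vee\rho$ always lies in this sub-poset (the latter being the tautology $\pi\vee\rho\geq\rho$, together with the structural fact that every element of $[\rho,\hat 1_n]$ arises as some $\tilde\tau$). These are standard properties of the non-crossing partition lattice, but they are the structural ingredients that permit the Möbius computation in $\NC(r)$ to be invoked without needing to work inside the larger lattice $\NC(n)$.
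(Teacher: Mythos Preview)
Your argument is correct and is essentially the standard M\"obius-inversion proof of the Krawczyk--Speicher formula as presented in \cite[Theorem~11.12]{NicaSpeicher:2006}. Note, however, that the paper does \emph{not} supply its own proof of this theorem: it is stated as a tool and attributed to \cite{KrawczykSpeicher:2000} and \cite[Theorem~11.12]{NicaSpeicher:2006}, so there is no in-paper proof to compare against. Your proof coincides with the one in the latter reference: the same M\"obius inversion in $\NC(r)$, the same identification of $\NC(r)$ with the interval $[\rho,\hat 1_n]$ via block substitution, and the same collapse of the inner sum via $\sum_{\sigma\geq\tau}\mu(\sigma,\hat 1_r)=\delta_{\tau,\hat 1_r}$. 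The structural facts you flag as potential obstacles (that $\sigma\mapsto\tilde\sigma$ is an order-isomorphism onto $[\rho,\hat 1_n]$, and that $\pi\vee\rho$ lies in this interval) are indeed routine consequences of $\rho$ being an interval partition and are treated explicitly in \cite[Lecture~9]{NicaSpeicher:2006}.
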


Our main tool is the following special case of  \cite[Proposition~4.5]{EjsmontLehner:2017}
which expresses cumulants of quadratic forms in 
free standard  semicircular variables and
takes the particularly simple form
\begin{equation}
  \label{eq:cumQnsemi}
  K_r(Q_n) = \Tr(A_n^r)
  ;
\end{equation}
thus the distributions of quadratic forms in semicircular variables
are easy to calculate, see \cite{EjsmontLehner:2019:commutators}.

\begin{notation}
For scalars $a,b, c\in\C$  we denote by  $\left[\begin{smallmatrix}
    c &a\\
    b& c
    \end{smallmatrix}\right]_n
\in M_n(\C)
$
the matrix whose diagonal elements are equal to $c$, whose  upper-triangular entries are equal to $a$ and
whose lower-triangular elements are equal to $b$, respectively. 
\end{notation}

\subsection{Combinatorics of tangent numbers}
\label{ssec:tangentnumbers}
The \emph{tangent numbers}
\begin{equation}
  \label{eq:tangentnumbers}
  T_{2k-1}=(-1)^{k+1}\frac{4^{k}(4^{k}-1){B_{2k}}}{2k}
\end{equation}
for $k\in\N $ are the Taylor coefficients of the tangent function
$$
\tan z = \sum_{n=1}^\infty T_n\frac{z^n}{n!} = z + \frac{2}{3!} z^3 +  \frac{16}{5!}z^5 +\frac{272}{7!}z^7 + \dotsm,
$$
see \cite[Page 287]{GrahamKnuthPatashnik:1994}.
The tangent numbers are complemented by the \emph{secant numbers}.  
Together they form
the sequence of $E_n$ of \emph{Euler zigzag numbers} which are the Taylor coefficients of the
function
$$
\tan z +\sec z=\sum_{n=0}^\infty
\frac{E_{n}}{n!}z^{n}.
$$ These numbers are also called \emph{up-down numbers} \cite{Carlitz:1975:permutations} or
\emph{snake numbers}  \cite{Arnold:1992:snakes,Hoffman:1999:derivative}
and  appear in several different contexts,
see for example \cite{Flajolet1980,Arnold:1991,StanleyVol2,Stanley2010} or  Andr\'e's theorem \cite{Andre1880}.

Similarly,
following Comtet~\cite[p.~260]{Comtet} (see also \cite{Cvijovic:2011:higher}) 
we define the \emph{arctangent numbers} by their exponential generating function
\begin{equation}
  \label{eq:def-arctannumbers}
  \frac{(\arctan z)^k}{k!} = \sum_{n=k}^\infty \frac{A_n^{(k)}}{n!}z^n;
\end{equation}
up to sign these are the same as the coefficients
of the hyperbolic arctangent function
\begin{equation}
  \label{eq:def-hyparctannumbers}
  \frac{(\atanh z)^k}{k!} = \sum_{n=k}^\infty \frac{\tilde{A}_n^{(k)}}{n!}z^n
\end{equation}
the latter are nonnegative and 
\begin{equation}
  \label{eq:signarctannumbers}
 A_n^{(k)} = (-i)^ki^n \tilde{A}_n^{(k)}.
\end{equation}

The \emph{higher order tangent numbers} $T_n^{(k)}$ were introduced by Carlitz and Scoville
\cite{CarlitzScoville:1972} as the coefficients of the Taylor series
$$\tan^{k+1} z =\sum_{n=k+1}^\infty T_n^{(k+1)}\frac{z^n}{n!}.$$
The generating function of the tangent polynomials $T_n(x) = \sum_{k=1}^nT_n^{(k)}x^k$ 
can be easily obtained from the geometric series
\begin{equation}
  \label{eq:genfunTnx}
\begin{aligned}
  T(x,z)
  &= \frac{x\tan z}{1-x\tan z}\\
  &= \sum_{k=1}^\infty x^k \tan^k z \\
  &= \sum_{n=0}^\infty \sum_{k=1}^n\frac{T_n^{(k)}}{n!} x^k z^n \\
  &= \sum_{n=0}^\infty \frac{T_n(x)}{n!} z^n 
\end{aligned}
\end{equation}
Note that our generating function  slightly differs from  Carlitz and Scoville's
\cite[Equation (1.6)]{CarlitzScoville:1972}, which is the expansion of the
function $\frac{\tan(z)}{1-x\tan(z)}$.

On the other hand it is well known that all derivatives of tangent and cotangent
can be expressed as certain polynomials, see  the side note \cite[Page 287]{GrahamKnuthPatashnik:1994})
and the recent studies \cite {Hoffman:1995,Hoffman:1999:derivative,Boyadzhiev:2007,Cvijovic:2009:derivative}.
To be specific, there is a sequence of polynomials $P_n(x)$ of degree
$n+1$, $n\geq 0$,
such that
\begin{align*}
  \frac{d^n}{d\theta^n}\tan\theta &= P_n(\tan\theta) \\
  \frac{d^n}{d\theta^n}\cot\theta &= (-1)^nP_n(\cot\theta)
\end{align*}
The generating function is easily derived from the Taylor series
$$
\tan(\theta+z) =  \sum_{n=0}^\infty \frac{P_n(\tan\theta)}{n!}z^n
= \frac{\tan\theta+\tan z}{1-\tan\theta\tan z}
$$
to be
$$
P(x,z) = \sum_{n=0}^\infty P_n(x)\frac{z^n}{n!} = \frac{x+\tan z}{1-x\tan z}
.
$$
Comparing the generating functions we find that
$$
xP(x,z) = (1+x^2)T(x,z) + x^2,
$$
and from this we conclude that
$$
xP_n(x) = (1+x^2)T_n(x),
$$
for $n\geq 1$, see also \cite {Cvijovic:2009:derivative}.
Note that $P_n(x)$ is divisible by $(1+x^2)$ because of the recurrence relation
$$
P_n(x)=(1+x^2)P_{n-1}'(x), \text{ } P_0(x)=x,
$$
see \cite[(6.95)]{GrahamKnuthPatashnik:1994}.

\subsection{An elementary lemma} 
The moments and the spectral measures  of the matrices of the underlying quadratic forms can be
computed explicitly and turn out to be connected to an old problem in classical
calculus. 
We first compute the eigenvalues of the matrix underlying
the quadratic form \eqref{eq:sumcommanticomm}.
\begin{lemm}\label{lemm:spectrum} Let $a,b\in\R$, $b\neq 0$ and 
$$ A_n=
\begin{bmatrix} 
  0  & a+bi & \dots  &  a+bi  \\ 
  a-bi &  0 & \dots & a+bi
  \\
  \multicolumn{4}{c}{\dotfill}\\
  a-ib &  a-bi & \dots  &   0
\end{bmatrix}\in M_n(\C).
$$
Then the eigenvalues of the matrix $A_n$ are given by 
$$
\lambda_k=b\cot\frac{\alpha+k\pi}{n}-a
,\text{ for } 0\leq k\leq n-1\text{ and }\alpha =\arccot(a/b). 
$$ 
\end{lemm}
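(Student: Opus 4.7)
The matrix $A_n$ is a Toeplitz-like Hermitian matrix whose entries depend only on $\mathrm{sgn}(j-i)$: writing $z=a+bi=re^{i\alpha}$ with $r=\sqrt{a^2+b^2}$ and $\alpha=\arccot(a/b)$ so that $a=r\cos\alpha$ and $b=r\sin\alpha$, we have $A_n = z\,U+\bar z\,L$, where $U$ (resp.\ $L$) is the strictly upper (resp.\ lower) triangular all-ones matrix. Because of this structure, the plan is to diagonalize $A_n$ by the ansatz
$$
  v = (\beta,\beta^2,\dots,\beta^n)^T,
$$
which is natural because applying a constant-entry triangular matrix to such a vector produces a geometric sum.

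Concretely, I would compute
$$
(A_n v)_j
= z\sum_{k>j}\beta^k + \bar z\sum_{k<j}\beta^k
= \frac{\beta^j(\bar z - z\beta)}{\beta-1} \;+\; \frac{z\beta^{n+1}-\bar z\beta}{\beta-1}.
$$
For $v$ to be an eigenvector the ``boundary'' term (second fraction) must vanish, which forces
$$
\beta^n = \bar z/z = e^{-2i\alpha}.
$$
The $n$ solutions are $\beta_k = e^{-2i\theta_k}$, $k=0,1,\dots,n-1$, where
$\theta_k = (\alpha+k\pi)/n$. Since $\alpha\in(0,\pi)$ (because $b\neq 0$), none of the $\beta_k$ equal $1$ so the formulas are well-defined, and the $n$ values $\theta_k$ lie in $(0,\pi)$ and are pairwise distinct modulo $\pi$; hence they give $n$ distinct eigenvalues, accounting for the full spectrum.

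Finally I would read off the eigenvalue from the first fraction:
$$
\lambda_k
= \frac{\bar z - z\beta_k}{\beta_k - 1}
= \frac{re^{-i\alpha}-re^{i\alpha}e^{-2i\theta_k}}{e^{-2i\theta_k}-1}
= \frac{r\sin(\alpha-\theta_k)}{\sin\theta_k},
$$
after multiplying numerator and denominator by $e^{i\theta_k}$ and recognizing the sines. Expanding $\sin(\alpha-\theta_k)=\sin\alpha\cos\theta_k-\cos\alpha\sin\theta_k$ and substituting $r\sin\alpha=b$, $r\cos\alpha=a$ yields
$$
\lambda_k = b\cot\theta_k - a = b\cot\tfrac{\alpha+k\pi}{n}-a,
$$
as claimed.

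There is no real obstacle here: the only delicate point is bookkeeping, namely checking that the $n$ roots $\beta_k$ are distinct and none is equal to $1$, so that the eigenvalue formula is genuinely defined and we obtain the full spectrum of the Hermitian matrix $A_n$. The reality of the $\lambda_k$ is automatic from Hermiticity and is also transparent from the final closed form.
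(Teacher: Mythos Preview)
Your proof is correct and takes a genuinely different route from the paper. The paper computes the characteristic polynomial $\chi_n(\lambda)$ by deriving a three-term recurrence via row and column operations on the determinant, solves it to obtain the closed form $\chi_n(\lambda)=\bigl(w(\lambda+\bar w)^n-\bar w(\lambda+w)^n\bigr)/(w-\bar w)$, and then locates its roots. You bypass the characteristic polynomial entirely by guessing the eigenvectors as geometric sequences $v=(\beta,\beta^2,\dots,\beta^n)^T$; the condition $A_nv=\lambda v$ then forces $\beta^n=\bar z/z$ and yields $\lambda$ directly. Your approach is shorter and explains \emph{why} the eigenvalues have this form (the Toeplitz-like structure makes geometric vectors natural), whereas the paper's approach has the advantage of producing $\chi_n$ explicitly, which is used later in the proof of Theorem~\ref{twr:CLTCommutatorsAniticommutators} to compute the cumulant generating function via the logarithmic derivative $\chi_n'/\chi_n$. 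One small remark: your claim that $\alpha\in(0,\pi)$ tacitly assumes $b>0$ (so that $z=re^{i\alpha}$ with $\sin\alpha>0$); for general $b\neq 0$ it is cleaner to argue that $\beta_k^n=\bar z/z\neq 1$ forces $\beta_k\neq 1$, and that the M\"obius map $\beta\mapsto(\bar z-z\beta)/(\beta-1)$ is injective (its determinant is $z-\bar z=2bi\neq 0$), which gives distinctness of the $\lambda_k$ without any branch issues.
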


\begin{proof}
The characteristic polynomial
$\chi_n(\lambda)=\det (\lambda I-A_n)$ 
satisfies the following recurrence relation. 
Let $w=a+bi=e^{i\alpha}$, the we have
\begin{align*}
  \chi_n(\lambda)
  &=
    \begin{vmatrix}
      \lambda &    -w &     -w & -w     &\dots& -w\\
      -\widebar{w}       &\lambda &     -w & -w     &\dots&-w\\      
      -\widebar{w}       & -\widebar{w}      &\lambda & -w     &\dots&-w\\      
      -\widebar{w}       & -\widebar{w}      & -\widebar{w}      &\lambda &\dots&-w\\      
      \multicolumn{6}{c}{\dotfill}\\
      -\widebar{w}       & -\widebar{w}      & -\widebar{w}      &  -\widebar{w}     &\dots&\lambda
    \end{vmatrix}
\intertext{we subtract the second row from the first row}
 &=
    \begin{vmatrix}
      \lambda +\widebar{w}&  -\lambda  -w &     0 & 0     &\dots& 0\\
      -\widebar{w}       &\lambda &     -w & -w     &\dots&-w\\      
      -\widebar{w}       & -\widebar{w}      &\lambda & -w     &\dots&-w\\      
      -\widebar{w}       & -\widebar{w}      & -\widebar{w}      &\lambda &\dots&-w\\      
      \multicolumn{6}{c}{\dotfill}\\
      -\widebar{w}       & -\widebar{w}      & -\widebar{w}      &  -\widebar{w}     &\dots&\lambda
    \end{vmatrix}
\intertext{and the   second column from the first column}
 &=
    \begin{vmatrix}
      2\lambda +w+\widebar{w}&  -\lambda  -w &     0 & 0     &\dots& 0\\
      -\lambda-\widebar{w}       &\lambda &     -w & -w     &\dots&-w\\      
      0       & -\widebar{w}      &\lambda & -w     &\dots&-w\\      
      0       & -\widebar{w}      & -\widebar{w}      &\lambda &\dots&-w\\      
      \multicolumn{6}{c}{\dotfill}\\
      0       & -\widebar{w}      & -\widebar{w}      &  -\widebar{w}     &\dots&\lambda
    \end{vmatrix}
  \\
  &= (2\lambda+w+\widebar{w})\chi_{n-1}(\lambda)-(\lambda+w)(\lambda+\widebar{w})\chi_{n-2}(\lambda)
\end{align*}
and the solution of this recurrence equation (with initial values
$\chi_0(\lambda)=1$
and $\chi_1(\lambda)=\lambda$) is
\begin{equation}
  \chi_n(\lambda) = \frac{w(\lambda+\widebar{w})^n-\widebar{w}(\lambda+w)^n}{w-\widebar{w}}
  .
\end{equation}
To compute the eigenvalues we may assume $\abs{w}=1$, i.e., $w=e^{i\alpha}$
and $\alpha =\arccot(a/b)$  (the general case follows by rescaling the matrix) and we substitute
$z=\lambda+w$. The matrix is selfadjoint and therefore any eigenvalue $\lambda$ is
real,
so $\widebar{z}=\lambda+\widebar{w}$ and we get
$$
w\widebar{z}^n-\widebar{w}z^n=0,
$$
i.e., $\Im(\widebar{w}z^n)=0$. Let $z=re^{i\theta}$, then this means
$$
\sin(n\theta-\alpha)=0
$$
and we conclude $\theta=\frac{\alpha+k\pi}{n}$.
We return to $\lambda=z-w=re^{i\theta}-e^{i\alpha}$. This is a real number and thus
the imaginary part vanishes, i.e., $r\sin\theta=\sin\alpha$, thus
$r=\frac{\sin\alpha}{\sin\theta}$
and finally
$$
\lambda = \sin\alpha\cot\theta - \cos\alpha 
$$
and in the general case where $w=a+ib$ the solutions are
\begin{equation}
  \label{eq:lambdak}
  \lambda_k = b\cot\frac{\alpha+k\pi}{n} - a,
  \quad 0\leq k\leq n-1.
\end{equation}
\end{proof}

\subsection{Cotangent sums}
The manipulations of the eigenvalues \eqref{eq:lambdak} will lead
to the following sums of cotangent powers which were
explicitly evaluated in our companion paper
\cite[Corollary~6.6]{EjsmontLehner:2019:cotangent}.
\begin{align}
  \label{eq:S2m+pi/2}
\sum_{k=1}^{n}\cot^{2m}\frac{(2k+1)\pi}{2n} 
    & = (-1)^mn 
    + \frac{1}{(2m-1)!}   \sum_{k=1}^m n^{2k} A_{2m}^{(2k)}\,T_{2k-1}\\
 \begin{split} \label{eq:Sm-pi/4}
\sum_{k=1}^n\cot^m\frac{(4k-1)\pi}{4n} 
    & = (-1)^{m/2}n \mathbbm{1}_{\text{$m$ even}}
    + \frac{1}{2(m-1)!}   \sum_{k=1}^m (-2n)^k A_{m}^{(k)} \, E_{k-1}\\
    &= \frac{(-2n)^m A_{m}^{(m)}E_{m-1}}{2(m-1)!}    
      + \mathcal{O}(n^{m-1}).
      \end{split}
\end{align}

\section{Limit theorems  and random matrix models for quadratic forms}

\subsection{A General  Limit Theorem}

In this section we consider  limit theorems for sums of commutators and other
quadratic forms of the following type.
\begin{theo}
  \label{thm:quadraticCLT}
  Let $A_n = [a_{i,j}^{(n)}]\in M_n(\IC)$ be a sequence of selfadjoint
  matrices such that
  $\sup_{i,j,n} \bigabs{a_{i,j}^{(n)}}<\infty$ and such that the matrix
  $\frac{1}{n}A_n$ has limit distribution $\mu$
  with respect to the nonnormalized trace.
  Let $X_i$ be free copies of a centered random variable $X$ of variance 1, then
  the sequence of quadratic forms
  $$
  Q_n  = \frac{1}{n}\sum_{i,j=1}^n a_{i,j}^{(n)} X_iX_j
  $$
  converges in distribution to $Y$, where   $$
  K_r(Y) =  \int
  t^r \dd{\mu(t)}.
  $$
\end{theo}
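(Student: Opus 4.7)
The plan is to expand $K_r(Q_n)$ via the Krawczyk--Speicher formula (Theorem~\ref{thm:krawczyk}), show that only non-crossing pair partitions contribute in the limit, and then reduce to the semicircular case \eqref{eq:cumQnsemi}.

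By multilinearity of the free cumulants,
$$
K_r(Q_n) = \frac{1}{n^r}\sum_{i_1,j_1,\ldots,i_r,j_r=1}^n a_{i_1j_1}^{(n)}\cdots a_{i_rj_r}^{(n)}\, K_r(X_{i_1}X_{j_1},\ldots,X_{i_r}X_{j_r}).
$$
Define $h:[2r]\to[n]$ by $h(2k-1)=i_k$, $h(2k)=j_k$, and let $\rho=\{(1,2),(3,4),\ldots,(2r-1,2r)\}\in\NC(2r)$ be the standard interval pair partition. Theorem~\ref{thm:krawczyk} combined with the vanishing of mixed cumulants \eqref{eq:kerh>=pi} gives
$$
K_r(X_{i_1}X_{j_1},\ldots,X_{i_r}X_{j_r}) = \sum_{\substack{\pi\in\NC(2r),\ \pi\vee\rho=\hat{1}_{2r}\\ \ker h\geq\pi}} \prod_{B\in\pi} K_{|B|}(X).
$$

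The next step is to discard partitions with fewer than $r$ blocks. Since $X$ is centered, $K_1(X)=0$, so any $\pi$ with a singleton block contributes zero; hence every relevant $\pi$ has all blocks of size at least two, and in particular $|\pi|\leq r$. Because the number of $h$'s with $\ker h\geq\pi$ is at most $n^{|\pi|}$ and $\sup_{i,j,n}|a_{ij}^{(n)}|<\infty$, the total contribution of such a $\pi$ to $K_r(Q_n)$ is of order $n^{|\pi|-r}$ and vanishes as $n\to\infty$ whenever $|\pi|<r$. Only non-crossing pair partitions survive in the limit.

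Finally, for any non-crossing pair partition $\pi$ the product of block cumulants equals $K_2(X)^r=1$, which is exactly the same value one would obtain for a free standard semicircular family $\{S_1,\ldots,S_n\}$ (for which all higher cumulants vanish). Hence $K_r(Q_n)$ agrees, up to $o(1)$, with the $r$th cumulant of the quadratic form $Q_n^S=\frac{1}{n}\sum a_{ij}^{(n)} S_iS_j$ in free semicircular variables. The latter is given by \eqref{eq:cumQnsemi} as $\frac{1}{n^r}\Tr(A_n^r)=\Tr((A_n/n)^r)$, which by the hypothesis on the limit distribution of $A_n/n$ under the nonnormalized trace converges to $\int t^r\,\dd{\mu(t)}$. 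Since this holds for every $r\in\IN$, we obtain $Q_n\xrightarrow{d}Y$ with $K_r(Y)=\int t^r\,\dd{\mu(t)}$.

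The main obstacle is making the order-of-magnitude bound in the second paragraph fully uniform: one needs a single bound on the moduli of the cumulants $K_m(X)$ for the finitely many $m\leq r$ that appear, together with a count of how many partitions with $|\pi|=k<r$ occur. The first point is automatic when $X$ is bounded, as is standard in the von Neumann algebraic setting of the paper, while the second is controlled by a coarse Catalan-type bound $|\NC(2r)|\leq 4^r$. The combinatorial fact that a pair partition $\pi$ satisfying $\pi\vee\rho=\hat{1}_{2r}$ actually exists (and so the limit is generally nonzero) is already embedded in the semicircular identity \eqref{eq:cumQnsemi}, which spares us from isolating it directly.
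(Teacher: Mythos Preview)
Your proof is correct and follows essentially the same approach as the paper: expand $K_r(Q_n)$ via the Krawczyk--Speicher formula, discard singletons using $K_1(X)=0$, and kill all $\pi$ with $|\pi|<r$ by the $n^{|\pi|-r}$ bound. The only difference is in the last step: the paper directly identifies $\pispecial{r}$ as the \emph{unique} noncrossing pair partition with $\pi\vee\onetwo{r}=\hat{1}_{2r}$ and reads off $\frac{1}{n^r}\Tr(A_n^r)$, whereas you observe that the surviving pair-partition contribution coincides with the semicircular case and invoke \eqref{eq:cumQnsemi} as a black box; both finishes are valid, and yours simply delegates that one combinatorial identification to the cited formula.
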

\begin{Rem}
  \label{rem:semicircle}
  From \cite[Proposition 2.10]{ColinsHasebeSakuma2018}, we conclude that the measure $\mu$ is discrete. 
  The limit measure in Theorem~\ref{thm:quadraticCLT}
  does not depend on the specific distribution of $X_i$ and therefore
  in the examples computed below we
  can replace the
  sequence $X_i$  by a free i.i.d.\  sequence of standard
  semicircular variables, which has the advantage that formula
  \eqref{eq:cumQnsemi} can be applied. 
\end{Rem}
\begin{proof}
  We use the product formula from Theorem~\ref{thm:krawczyk}:
  \begin{align*}
    K_r(Q_n)
    &=\frac{1}{n^{r}}
      \sum_{i_1,i_2,\dots,i_{2r}}  a_{i_1,i_2}^{(n)} a_{i_3,i_4}^{(n)} \dotsm a_{i_{2r-1},i_{2r}}^{(n)}
         K_r(X_{i_1}X_{i_2},X_{i_3}X_{i_4},\dots,X_{i_{2r-1}}X_{i_{2r}})
    \\
    &=\frac{1}{n^{r}}
      \sum_{i_1,i_2,\dots,i_{2r}}  a_{i_1,i_2}^{(n)} a_{i_3,i_4}^{(n)} \dotsm a_{i_{2r-1},i_{2r}}^{(n)}
      \sum_{\substack{\pi\in \NC(2r) \\ \pi\vee\makeatletter{}\begin{tikzpicture}[inner sep=0pt,scale=0.04]
\draw (2,0)--(2,4.5);
\draw (8,0)--(8,4.5);
\draw (14,0)--(14,4.5);
\draw (20,0)--(20,4.5);
\node at (30,2){$\cdots$};
\draw (38,0)--(38,4.5);
\draw (44,0)--(44,4.5);
\draw (2,4.5)--(8,4.5);
\draw (14,4.5)--(20,4.5);
\draw (38,4.5)--(44,4.5);
\end{tikzpicture}
 =\hat{1}_{2r}} }
         K_\pi(X_{i_1},X_{i_2},X_{i_3},X_{i_4},\dots,X_{i_{2r-1}},X_{i_{2r}})
    \\
    &=\frac{1}{n^{r}}
      \sum_{\substack{\pi\in \NC(2r) \\ \pi\vee\makeatletter{}\begin{tikzpicture}[inner sep=0pt,scale=0.04]
\draw (2,0)--(2,4.5);
\draw (8,0)--(8,4.5);
\draw (14,0)--(14,4.5);
\draw (20,0)--(20,4.5);
\node at (30,2){$\cdots$};
\draw (38,0)--(38,4.5);
\draw (44,0)--(44,4.5);
\draw (2,4.5)--(8,4.5);
\draw (14,4.5)--(20,4.5);
\draw (38,4.5)--(44,4.5);
\end{tikzpicture}
 =\hat{1}_{2r}} }
      \sum_{\ker\underline{i}\geq \pi}
         a_{i_1,i_2}^{(n)} a_{i_3,i_4}^{(n)} \dotsm a_{i_{2r-1},i_{2r}}^{(n)}
         K_\pi(X).
  \end{align*}
  By assumption $X$ is centered and therefore only partitions without 
  singletons contribute to this sum.
  Every block of such a partition $\pi$ has at least size 2
  and therefore $\abs{\pi}\leq r$. This in turn implies that there
  are only $n^{\abs{\pi}}$ allowed choices of indices $\underline{i}$
  and we have the following estimate
  \begin{align*}
    \abs{\frac{1}{n^{r}}
       \sum_{\ker\underline{i}\geq \pi}
         a_{i_1,i_2}^{(n)}a_{i_3,i_4}^{(n)} \dotsm a_{i_{2r-1},i_{2r}}^{(n)}
         K_\pi(X)}
    &\leq 
      n^{\abs{\pi}-r} C^r
      \abs{K_\pi(X)}
  \end{align*}
  where $C= \sup_{i,j,n} \abs{ a_{ij}^{(n)}}$.
  Now unless $\abs{\pi}=r$ this converges to zero as $n\to\infty$,
  on the other hand, $\abs{\pi}=r$ is only possible if $\pi$ is a pair partition.
  The only pair partition satisfying $\pi\vee\makeatletter{}\begin{tikzpicture}[inner sep=0pt,scale=0.04]
\draw (2,0)--(2,4.5);
\draw (8,0)--(8,4.5);
\draw (14,0)--(14,4.5);
\draw (20,0)--(20,4.5);
\node at (30,2){$\cdots$};
\draw (38,0)--(38,4.5);
\draw (44,0)--(44,4.5);
\draw (2,4.5)--(8,4.5);
\draw (14,4.5)--(20,4.5);
\draw (38,4.5)--(44,4.5);
\end{tikzpicture}
 =\hat{1}_{2r}$ is the partition $\pi=\pispecial{r}$  and finally we have
  $$
  K_r(Q_n) = \frac{1}{n^r}\Tr(A_n^r)K_2(X)^r + \mathcal{O}(1/n) \xrightarrow[n\to\infty]{} \int
  t^r \dd{\mu(t)}.
  $$
\end{proof}

\subsection{Random matrix models }
In this  subsection we construct random matrices whose limit law coincides with
the limit law from Theorem~\ref{thm:quadraticCLT}.
In some sense it is a simultaneous limit obtained from approximating the
semicircle law on the one hand
as in section~\ref{ssec:randommatrix}
and the free central limit law on the other hand.
To this end we consider compressions with random matrices.
In 
\cite[Proposition 12.18]{NicaSpeicher:2006} the authors
describe compound free Poisson distributions as free compressions with
semicircular operators.
The next proposition provides a complex version of this result, i.e.,
a description of compressions with circular operators.
Recall that a \textsl{circular operator}
is an operator $C$ of the form $C =
(X+iY)/\sqrt{2}$ 
where $X$ and $Y$ are free standard semicircular random variables.
\begin{prop} \label{prop:CykliczneVariancja-r}
  Let $C_1, C_2,\dots, C_n\in \A$ be a free family of circular random
  variables such that $K_2(C_i,C^\ast_i)=1$
  and pick  an arbitrary element
  $Z\in \A_{sa}$ which is free from the $C_i$.
  Let further $A=[a_{i,j}]_{i,j=1}^n\in M_n(\C)$ be a scalar selfadjoint matrix
  and put $T_n=\sum_{i,j}^na_{i,j}C_iZC_j^\ast$.
  Then the cumulants of $T_n$ are given by
\begin{align}  \label{eq:kumulantsamplevariancenotiid-r}
 K_r(T_n)= \Tr\otimes \tau([A
            \otimes Z]^r),
    \end{align}
where $A\otimes Z \in M_n(\C)\otimes \A$, with functional
$\Tr\otimes \tau$.
\end{prop}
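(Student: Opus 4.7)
The plan is to expand $K_r(T_n)$ via the Krawczyk-Speicher product formula (Theorem~\ref{thm:krawczyk}) and then isolate the unique combinatorial configuration that survives. By multilinearity,
$$
K_r(T_n) = \sum_{\underline{i},\underline{j}} \Bigl(\prod_{k=1}^r a_{i_k,j_k}\Bigr)\, K_r(C_{i_1}ZC_{j_1}^\ast,\ldots, C_{i_r}ZC_{j_r}^\ast).
$$
Applying Theorem~\ref{thm:krawczyk} with the interval partition $\rho$ whose $r$ blocks are the consecutive triples $\{3k-2,3k-1,3k\}$ rewrites each inner cumulant as $\sum_{\pi} K_\pi(C_{i_1}, Z, C_{j_1}^\ast, C_{i_2}, Z, C_{j_2}^\ast,\ldots)$ summed over $\pi \in \NC(3r)$ with $\pi \vee \rho = \hat{1}_{3r}$.

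Since $Z$ is free from the family $(C_i)$, the vanishing of mixed cumulants \eqref{eq:kerh>=pi} forces every block of $\pi$ to be monochromatic, so $\pi$ decomposes as $\pi_Z \cup \pi_C$, where $\pi_Z \in \NC(r)$ partitions the ``$Z$-positions'' $\{3k-1:1\le k\le r\}$ and $\pi_C$ partitions the $2r$ ``$C$-positions.'' Because each $C_i$ is circular with $K_2(C_i,C_i^\ast)=1$ the only nonzero free cumulant in $C_i,C_i^\ast$, and because distinct $C_i$'s are free, the blocks of $\pi_C$ must in fact be pairs each joining an unstarred position $3k-2$ to a starred position $3l$; the resulting delta turns the index constraint $i_k=j_l$ into a combinatorial condition on the sum.

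The combinatorial heart of the proof is the claim that, among all such noncrossing bipartite pairings $\pi_C$, only the maximally nested one
$$
\pi_C^\ast = \bigl\{\{1,3r\}\bigr\} \cup \bigl\{\{3k,3k+1\} : 1\leq k\leq r-1\bigr\}
$$
admits any compatible $\pi_Z$, and for this $\pi_C^\ast$ \emph{every} $\pi_Z \in \NC(r)$ produces a valid $\pi$ satisfying $\pi \vee \rho = \hat{1}_{3r}$. Indeed, any other bipartite pairing contains a $C$-pair that separates the $Z$-positions into groups whose reunification would require a $\pi_Z$-block crossing that $C$-pair; conversely, $\pi_C^\ast$ places all $Z$-positions inside a single outer arc, so any noncrossing $\pi_Z$ fits without crossings. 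This uniqueness is the main obstacle; I expect to handle it by induction on $r$ or by a direct analysis of which $\rho$-triples remain disconnected once $\pi_C$ is added.

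Finally, the matching conditions imposed by $\pi_C^\ast$ collapse to $j_k = i_{k+1}$ for $k<r$ and $j_r = i_1$, so the surviving index sum is $\sum_{\underline{i}} a_{i_1,i_2}\cdots a_{i_r,i_1} = \Tr(A^r)$, while $\sum_{\pi_Z \in \NC(r)} K_{\pi_Z}(Z) = \tau(Z^r)$ by the moment-cumulant relation \eqref{eq:DefinicjaKumulant}. Combining,
$$
K_r(T_n) = \Tr(A^r)\,\tau(Z^r) = (\Tr\otimes\tau)(A^r\otimes Z^r) = (\Tr\otimes\tau)\bigl((A\otimes Z)^r\bigr),
$$
which is the required formula.
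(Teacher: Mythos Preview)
Your proposal is correct and follows essentially the same route as the paper's proof: expand via Theorem~\ref{thm:krawczyk}, split each contributing $\pi$ into a $C$-part and a $Z$-part using freeness and the circular cumulant structure, identify the unique surviving $C$-pairing as the nested matching $\pi_C^\ast=\pispecial{r}$ on the $C$-positions, and then sum freely over all $\pi_Z\in\NC(r)$ to obtain $\Tr(A^r)\,\tau(Z^r)$. The only difference is that the paper dispatches the uniqueness of $\pi_C^\ast$ by citing the argument of \cite[Proposition~12.18]{NicaSpeicher:2006}, whereas you sketch it directly via the ``separating arc'' observation; your sketch is sound, and the induction you propose is exactly how such a claim is typically made rigorous.
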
   
\begin{proof}
From the definition of $T_n$ and Theorem \ref{thm:krawczyk} we see that 
\begin{align*}
K_r(T_n)
& = \sum_{\substack{
     i_1,i_2,\dots,i_{2r} \in[n]  
      }}
    \sum_{\substack{\pi\in\NC(3r)\\ \pi\vee \onethree{r}=\hat{1}_{3r}}}
      a_{i_{1},i_{2}}   a_{i_{3},i_{4}} \dotsm       a_{i_{2r-1},i_{2r}}
      K_\pi(C_{i_{1}},Z,C^\ast_{i_{2}},C_{i_{3}},Z,C^\ast_{i_{4}},\dots,C_{i_{2r-1}},Z,C^\ast_{i_{2r}}).
 \intertext{Since $Z$ is free from the family $C_i$  every 
  partition with nonzero contribution can be written as $\pi=\rho\cup\sigma$
  where
  $\rho\in \NC_2(\{1,3,4,6,7,\dots,3r-2,3r\})$ is a pair partition and
  $\sigma\in\NC(\{2,5,\dots,3r-1\}$ is arbitrary.
  Now by the argument from the proof of   \cite[Proposition 12.18]{NicaSpeicher:2006} we conclude that the only
  pair partition satisfying the required condition is
  $\rho=\makeatletter{}\begin{tikzpicture}[inner sep=0pt,scale=0.04]
\draw (2,0)--(2,7.5);
\draw (8,0)--(8,4.5);
\draw (14,0)--(14,4.5);
\draw (20,0)--(20,4.5);
\draw (26,0)--(26,4.5);
\node at (36,2){$\cdots$};
\draw (44,0)--(44,4.5);
\draw (50,0)--(50,4.5);
\draw (56,0)--(56,7.5);
\draw (8,4.5)--(14,4.5);
\draw (20,4.5)--(26,4.5);
\draw (44,4.5)--(50,4.5);
\draw (2,7.5)--(56,7.5);
\end{tikzpicture}
 $, while $\sigma$ is arbitrary.
  The result is}
 &= \sum_{i_1,i_2,\dots,i_r\in[n]} a_{i_r,i_1} a_{i_1,i_2} \dotsm a_{i_{r-1,i_r}}
   \sum_{\substack{\sigma{}\in\NC(r)}}K_\sigma(Z)\\
&= \Tr(A^r)\tau(Z^r)
\end{align*} 
  which is the desired formula.
\end{proof}
Let us now introduce some random matrix models. For notation see
section~\ref{ssec:randommatrix}.
\begin{prop}\label{randommatrixmodel}
  Let $X_{N\times NM}$ be a complex Gaussian random matrix  of size $N\times NM$ and
  let $D_M = [d_{i,j}^{(M)}] $  be a sequence of selfadjoint deterministic $M\times M$ 
  matrices such that $D_N$  has limit distribution $\mu$
  with respect to the nonnormalized trace. 
  Then for any sequence $P_{N}$  of  $N\times N$ (selfadjoint)
  deterministic   matrices
  which converges to $Z$ 
  with limit distribution $\nu$
  we have
  $$
  X_{N\times NM}[D_M\otimes P_N]X_{N\times NM}^\ast
  \xrightarrow[M,N\to\infty]{d}
  Y,
  $$
  where
  \begin{align*}
  R_{Y}(z)&=\sum_{r=1}^\infty\int_\R x^{r}\dd{\mu(x)}\tau(Z^{r})z^r\\
    &= \int\int\frac{xt}{1-xtz}\,\dd{\mu(x)}\,\dd{\nu(t)}.
  \end{align*}
  
\end{prop}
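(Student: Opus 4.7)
The plan is to exploit the block decomposition of the rectangular Gaussian matrix and reduce the statement to Proposition~\ref{prop:CykliczneVariancja-r} via the standard asymptotic freeness of independent Gaussian and deterministic matrices.

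First, I would split the $N\times NM$ Gaussian matrix into $M$ square blocks, writing $X_{N\times NM} = [C_1^{(N)}\mid C_2^{(N)}\mid\dotsb\mid C_M^{(N)}]$, where each $C_i^{(N)}$ is an $N\times N$ complex Gaussian matrix whose entries have variance $1/N$, and the family $(C_i^{(N)})_{i=1}^M$ is independent. Regarding $D_M\otimes P_N$ as an $M\times M$ block matrix whose $(i,j)$-block is $d_{i,j}^{(M)} P_N$, block multiplication yields the key identity
$$
X_{N\times NM}[D_M\otimes P_N]X_{N\times NM}^\ast
  = \sum_{i,j=1}^M d_{i,j}^{(M)}\,C_i^{(N)}\,P_N\,(C_j^{(N)})^\ast,
$$
which is exactly of the shape treated in Proposition~\ref{prop:CykliczneVariancja-r}, but with random matrices approximating free circular operators and with $P_N$ approximating $Z$.

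Next, I would fix $M$ and let $N\to\infty$. By Voiculescu's asymptotic freeness theorem for independent complex Gaussian matrices and bounded deterministic matrices, the family $(C_1^{(N)},\dots,C_M^{(N)},P_N)$ converges in $\ast$-distribution to a family $(C_1,\dots,C_M,Z)$ in some tracial noncommutative probability space $(\A,\tau)$, where the $C_i$ form a free family of standard circular elements with $K_2(C_i,C_i^\ast)=1$, the algebra they generate is free from $Z$, and $Z$ has distribution $\nu$. Consequently,
$$
X_{N\times NM}[D_M\otimes P_N]X_{N\times NM}^\ast \xrightarrow[N\to\infty]{d}
  T_M := \sum_{i,j=1}^M d_{i,j}^{(M)}\,C_i\,Z\,C_j^\ast,
$$
and Proposition~\ref{prop:CykliczneVariancja-r} applied with $A=D_M$ gives $K_r(T_M) = \Tr\otimes\tau([D_M\otimes Z]^r) = \Tr(D_M^r)\,\tau(Z^r)$.

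Finally, I would let $M\to\infty$. The hypothesis that $D_M$ has limit distribution $\mu$ with respect to the nonnormalized trace gives $\Tr(D_M^r)\to\int x^r\,\dd{\mu(x)}$ for every $r\in\IN$, while $\tau(Z^r)=\int t^r\,\dd{\nu(t)}$ is already fixed. Therefore $K_r(T_M)\to\int x^r\,\dd{\mu(x)}\int t^r\,\dd{\nu(t)} =: K_r(Y)$, and summing a geometric series inside the double integral yields
$$
R_Y(z) = \sum_{r\geq1}K_r(Y)\,z^{r-1}
  = \int\int\sum_{r\geq1}(xt)^rz^{r-1}\,\dd{\mu(x)}\dd{\nu(t)}
  = \int\int\frac{xt}{1-xtz}\,\dd{\mu(x)}\dd{\nu(t)},
$$
which is the claimed formula.

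The main technical obstacle is that the proposition is phrased as a joint limit in $(M,N)$, so the iterated convergence above must be promoted to a simultaneous one along arbitrary sequences $M,N\to\infty$. This requires uniform control of the moments: the uniform boundedness of the entries of $D_M$ (inherited from convergence with respect to the nonnormalized trace to a compactly supported $\mu$), together with uniform boundedness of $P_N$ in operator norm, feeds into the cumulant expansion and shows that the error terms in the asymptotic freeness argument are of order $O(1/N)$ uniformly in $M$. A secondary, but standard, point is establishing the joint asymptotic $\ast$-freeness of the Gaussian blocks $C_i^{(N)}$ together with the deterministic matrix $P_N$, which follows from the unitary invariance of the complex Gaussian distribution.
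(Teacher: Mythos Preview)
Your proposal is correct and follows essentially the same route as the paper: block-decompose $X_{N\times NM}$ into $M$ independent square Gaussian blocks, fix $M$ and invoke Voiculescu's asymptotic freeness to pass to free circulars and $Z$, apply Proposition~\ref{prop:CykliczneVariancja-r} to get $K_r(T_M)=\Tr(D_M^r)\tau(Z^r)$, and then let $M\to\infty$. Your write-up is in fact more careful than the paper's own proof, which performs only the iterated limit $N\to\infty$ then $M\to\infty$ without commenting on the joint-limit issue you flag in your last paragraph.
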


  \begin{proof} 
 Fix $M$ and observe that we can represent random matrix as a quadratic form in $M$ variables by the formula 
 $$ X_{N\times NM}[D_M\otimes P_N] X_{N\times NM}^\ast=\sum_{i,j=1}^M d_{{i,j}}X_{i,N}P_NX_{j,N}^\ast,$$
where  $X_{i,N}$ are  complex Gaussian random matrices (non selfadjoint) of size $N\times N$.
From Voiculescu's asymptotic freeness results
  \cite{Voiculescu:1991} (see also \cite[Chapter 4]{MingoSpeicher:2017})
  we infer that $$\sum_{i,j=1}^M d_{{i,j}}X_{i,N}P_NX_{j,N}^\ast
  \xrightarrow[\substack{N\to\infty}]{d}\sum_{i,j=1}^M d_{{i,j}} C_iZC_{j}^\ast,$$ 
  where $C_i$ has circular distribution  and $C_i$ and  $Z$ are free.
  By  Proposition \ref{prop:CykliczneVariancja-r}, we have 
  \begin{align*}
  K_r(\sum_{i,j=1}^M d_{{i,j}}C_iZC_{j}^\ast)=\Tr(D_M^r)\tau(Z^r)
  \xrightarrow[\substack{M\to\infty}]{} \int_\R x^{r}\dd{\mu(x)}\tau(Z^r),
  \end{align*}
  which finishes the proof. 
  
\end{proof}

This implies the following random matrix model
for the limit law from Theorem~\ref{thm:quadraticCLT}.
It was used to produce the histograms in 
Figures~\ref{fig:tanlaw} and~\ref{fig:ziglaw}.
\begin{cor}\label{corrandommatrixmodel2}
  Let $X_{N\times NM}$ be  as  in Proposition~\ref{randommatrixmodel} and
 $A_M = [a_{i,j}^{(M)}] $ be a sequence of selfadjoint  $M\times M$ matrices  as  in Theorem~\ref{thm:quadraticCLT}.
 Let $P_{N}$  be a sequence of  $N\times N$  deterministic 
 matrices all of whose moments with respect to the normalized trace converge to
 $1$,
 e.g., the identity matrices  $P_{N}=\left[\begin{smallmatrix}
    1 &0\\
    0& 1
  \end{smallmatrix}\right]_N$ or any projection matrix of large  rank  like
$P_{N}=\left[\begin{smallmatrix}
    1 & 0\\
    0& 1
    \end{smallmatrix}\right]_N-\frac{1}{N}\left[\begin{smallmatrix}
    1 & 1\\
     1& 1
    \end{smallmatrix}\right]_N$,
  then the spectral measures of
  $$
  \frac{1}{M}X_{N\times NM}[A_M\otimes P_N]X_{N\times NM}^\ast 
  $$
  converge in distribution to the
  limit law described in Theorem~\ref{thm:quadraticCLT}.
        \end{cor}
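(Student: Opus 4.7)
The plan is to obtain this corollary as a direct specialization of Proposition~\ref{randommatrixmodel}, after absorbing the prefactor $\frac{1}{M}$ into the first tensor factor and identifying the limit of $P_N$.

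First I would observe that
$$
\frac{1}{M} X_{N\times NM}[A_M\otimes P_N] X_{N\times NM}^\ast
= X_{N\times NM}\bigl[\tfrac{1}{M}A_M\otimes P_N\bigr] X_{N\times NM}^\ast,
$$
so that setting $D_M := \tfrac{1}{M}A_M$ puts me in position to apply Proposition~\ref{randommatrixmodel}. The hypothesis on $D_M$ is exactly what is inherited from Theorem~\ref{thm:quadraticCLT}: by assumption $\tfrac{1}{M}A_M$ has limit distribution $\mu$ with respect to the nonnormalized trace, which is the hypothesis imposed on the first factor in Proposition~\ref{randommatrixmodel}.

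Next I would verify the hypothesis on $P_N$. The condition that every moment $\tau_N(P_N^r)$ converges to $1$ means, by the moment method, that $P_N$ converges in distribution to the constant $Z = 1$, i.e.\ the limiting measure is $\nu=\delta_1$ and hence $\tau(Z^r)=1$ for every $r$. The two explicit examples are immediately seen to satisfy this: the identity has $\tau_N(I^r)=1$ for all $r$, while $I - \tfrac{1}{N}\left[\begin{smallmatrix} 1 & 1\\ 1& 1 \end{smallmatrix}\right]_N$ is a projection of rank $N-1$ and therefore $\tau_N\bigl((I-\tfrac{1}{N}J)^r\bigr)=1-\tfrac{1}{N}\to 1$. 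With $\tau(Z^r)=1$ substituted into Proposition~\ref{randommatrixmodel}, the $R$-transform formula collapses to
$$
R_Y(z) = \sum_{r=1}^\infty \int_{\R} x^r\,\dd{\mu(x)}\,z^r,
$$
so that $K_r(Y) = \int x^r\,\dd{\mu(x)}$, which is precisely the cumulant description of the limit in Theorem~\ref{thm:quadraticCLT}. The two limit laws therefore coincide.

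The argument is essentially a bookkeeping exercise once Proposition~\ref{randommatrixmodel} is available, and I do not foresee a serious obstacle. The only point meriting a brief check is that the assumption ``all moments of $P_N$ converge to $1$'' is indeed the right translation of the hypothesis ``$P_N$ converges to $Z$ with limit distribution $\nu$'' appearing in Proposition~\ref{randommatrixmodel}, with $\nu=\delta_1$; since $P_N$ is selfadjoint with bounded moments this is standard. Aside from this verification, everything is taken care of by Proposition~\ref{randommatrixmodel} and its iterated-limit proof.
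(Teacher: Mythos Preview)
Your proposal is correct and matches the paper's intended argument: the paper gives no explicit proof, treating the statement as an immediate corollary of Proposition~\ref{randommatrixmodel}, and your derivation is precisely the specialization $D_M=\tfrac{1}{M}A_M$, $\nu=\delta_1$ that the paper has in mind.
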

Next we provide another random  matrix model with self-adjoint GUE matrices. 
\begin{prop}\label{prop:randommatrixmodelnonnormalized}
  Let $X_N$ be standard random matrix from the GUE of size $N\times N$ and
  let $D_N  \in M_N(\IC)$ be a sequence of selfadjoint deterministic
  matrices such that $D_N$  has limit distribution $\mu$
  with respect to the nonnormalized trace.
  Then the random matrix sequence $ X_ND_NX_N$  converges to the measure $\mu$ with respect to the
  nonnormalized trace.
\end{prop}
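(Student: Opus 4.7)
The plan is to combine the cyclic identity $\Tr((X_N D_N X_N)^r) = \Tr((X_N^2 D_N)^r)$ with a Wick expansion of the Gaussian expectation. The reduction to $(X_N^2 D_N)^r$ is advantageous because $\E[X_N^2] = I_N$ for a GUE matrix, so morally the answer should be $\Tr(D_N^r) \to \int t^r\,\dd{\mu(t)}$, and the real work is to show that the fluctuations $X_N^2-I_N$ contribute negligibly.

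Applying Wick's formula to $\E[\Tr((X_N^2 D_N)^r)]$ using the covariance $\E[(X_N)_{ab}(X_N)_{cd}] = \frac{1}{N}\delta_{ad}\delta_{bc}$ expresses it as a sum over pair partitions $\pi$ of the $2r$ occurrences of $X_N$ in the cyclic word $XX D\,XX D\,\dotsm\,XX D$ of length $3r$. The distinguished ``block pairing'' $\pi_0$, which matches the two $X_N$-factors inside each $X_N^2$ block, produces the Wick collapse $\E[(X_N)_{ab}(X_N)_{bc}] = \frac{1}{N}\delta_{ac}$ at every block. This replaces each $X_N^2$ factor by the identity matrix and chains the $r$ copies of $D_N$ into a single cycle, giving the exact contribution $\Tr(D_N^r)$, which converges to $\int t^r\,\dd{\mu(t)}$ by hypothesis.

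For any other pair partition I would invoke the standard ribbon-graph picture for Gaussian matrix integrals. Viewing the cyclic word as a polygon with $2r$ $X$-edges and $r$ $D$-edges and gluing the $X$-edges in pairs according to $\pi$ yields a surface whose faces are precisely the index loops of the Wick sum. A face meeting no $D_N$-edge contributes a free summation, hence a factor $N$, while a face meeting $m \geq 1$ copies of $D_N$ contributes $\Tr(D_N^m)$, which is $O(1)$ by our assumption. Writing $L_0$ and $L_1$ for the numbers of empty and $D$-meeting faces, the total contribution of $\pi$ is $N^{L_0 - r} \prod_{i=1}^{L_1} \Tr(D_N^{m_i})$. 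The Euler bound $L_0 + L_1 \leq r+1$, saturated only for non-crossing $\pi$, together with the trivial lower bound $L_1 \geq 1$, forces $L_0 \leq r$, with equality precisely when $\pi$ is non-crossing and places all $r$ copies of $D_N$ on a single face.

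The main obstacle is the combinatorial uniqueness step: showing that $\pi_0$ is the only pair partition achieving $L_0 = r$. I expect this to follow from a short planar argument: any chord of a non-crossing pairing that joins $X_N$-letters from two different $X_N^2$-blocks separates the outer region of the chord diagram into two parts, each containing at least one $D$-edge, and therefore forces $L_1 \geq 2$. Granting this uniqueness, every $\pi \neq \pi_0$ contributes $O(N^{-1})$, so that $\E[\Tr((X_N D_N X_N)^r)] = \Tr(D_N^r) + O(N^{-1}) \to \int t^r\,\dd{\mu(t)}$ for every $r$, which is the claimed moment convergence.
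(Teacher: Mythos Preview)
Your approach is essentially the paper's: both expand $\E[\Tr((X_ND_NX_N)^r)]$ by Wick's formula and isolate a single dominant pair partition. Your cyclic rewriting $\Tr((X_ND_NX_N)^r)=\Tr((X_N^2D_N)^r)$ and the paper's insertion of explicit identity matrices between consecutive $X_N$'s produce the same alternating word $X\,I\,X\,D\,X\,I\,X\,D\cdots$ up to a cyclic shift, so your ``faces'' are precisely the cycles of $\pi\gamma$ in the paper's Proposition~\ref{prop:ranodmmatrixpropSpeicher}, and your block pairing $\pi_0$ is the paper's $\pispecial{r}$ after that shift.

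The one place where you take a longer road is the uniqueness step you flag as ``the main obstacle''. You reach $L_0\le r$ via the Euler bound $L_0+L_1\le r+1$ and $L_1\ge1$, and then need a planar separation argument to rule out other non-crossing $\pi$ with $L_1=1$. The paper's argument is more direct and avoids planarity altogether: there are exactly $r$ identity-slots (the arcs inside each $X_N^2$ block), so $L_0\le r$ is immediate by pigeonhole, and $L_0=r$ forces each of these $r$ slots to be a singleton cycle of $\pi\gamma$, i.e.\ $\pi(\gamma(k))=k$ at every identity position, which reads off the pairs of $\pi$ one by one. This determines $\pi=\pi_0$ in a single line, with no appeal to non-crossingness or chord diagrams.
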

\begin{Rem}
  First observe that the preceding result is a special case of \cite[Theorem 5.1 (i), $k=1$]{ColinsHasebeSakuma2018}, but our proof is different.
  On the other hand,
  the spectral measures of $X_ND_NX_N$ converge to zero with respect to the normalized trace.
  Indeed $\lim_{N\to \infty}\Tr(D_N^m)/N=0$ and a sequence of standard GUE matrices is almost surely uniformly
  bounded. The point here is that with respect to the nonnormalized trace
  $\Tr(\cdot)$  we obtain interesting limits.
\end{Rem}

In order to prove  Proposition \ref{prop:randommatrixmodelnonnormalized} we will
refer to a combinatorial result from random matrix theory, which we rewrite in
terms of the nonnormalized trace. To formulate this result we need the
following notation.
\begin{notation}
  \label{def:pairpartitions}
  \begin{enumerate}[1.]
   \item We denote by $\SP_2(m)$ the set of pair partitions,
    i.e., partitions of $\{1,2,\dots,m\}$ into blocks of size $2$;
    this set is empty unless $m$ is even.
   \item Let $\pi\in\SP_2(m)$ be a pair partition.
    To each block  $\{i,j\}\in\pi$ we associate the transposition $(i\,j)$
    and we identify the pair partition $\pi$ with the permutation obtained
    as the product of these transpositions. Since they are disjoint, this
    permutation is well defined.
   \item Let $\sigma\in\SG_n$ be a permutation and
    $\sigma=\gamma_1\gamma_2\dots\gamma_r$ be its cycle decomposition.
    Then for any family of matrices $A=(A_1,A_2,\dots,A_n)$ we denote by
    $$
    \Tr_\sigma(A_1,A_2,\dots,A_n) = \Tr_{\gamma_1}(A)\Tr_{\gamma_2}(A)\dotsm\Tr_{\gamma_r}(A)
    $$
    where for a cycle $\gamma=(i_1\,i_2\,\dotsm\, i_k)$ the cyclic trace is
    $$
    \Tr_\gamma(A) = \Tr(A_{i_1}A_{i_2}\dotsm A_{i_k}).
    $$
  \end{enumerate}

\end{notation}

\begin{prop}{\cite[Proposition~22.32]{NicaSpeicher:2006}}
  \label{prop:ranodmmatrixpropSpeicher}
  Let $X_N$ be a standard $N\times N$ GUE matrix as in
  Proposition~\ref{prop:randommatrixmodelnonnormalized}
  and $D$  be a constant $N\times N$ matrix.

  Then we have for all $m\in \N$, and  all $q_1,\dots, q_m \in \N$, that 
  $$
  \Tr\otimes \E(X_N D^{q_1}\dots X_N D^{q_m})=\sum_{\pi\in
    \SP_2(m)}\Tr_{\pi\gamma}(D^{q_1},\dots,D^{q_m})N^{-m/2},
  $$
  where
  $\gamma\in\SG_m$ is the cyclic permutation with one cycle
  $\gamma=(1,2,\dots,m)$, $\pi\gamma$ is the composition of this cycle with the permutation $\pi$
  associated to the pair partition according to Notation~\ref{def:pairpartitions}.
\end{prop}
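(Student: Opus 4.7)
The plan is to expand the left-hand side in coordinates and apply the Gaussian Wick formula. Writing the operator trace as a sum over $2m$ indices and extracting the $X_N$-entries gives
\[
\Tr\otimes\E(X_N D^{q_1}\dotsm X_N D^{q_m})
=\sum_{i_1,\dots,i_{2m}}
   \E\bigl(x_{i_1 i_2}x_{i_3 i_4}\dotsm x_{i_{2m-1}i_{2m}}\bigr)
   \prod_{k=1}^{m}(D^{q_k})_{i_{2k},\,i_{2k+1}},
\]
where indices are read cyclically ($i_{2m+1}=i_1$); the $k$-th factor of $X_N$ contributes the entry $x_{i_{2k-1}i_{2k}}$ and $D^{q_k}$ glues the ``outgoing'' index $i_{2k}$ to the ``incoming'' index $i_{2k+1}$ of the next $X_N$.

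Next, the complex Gaussian entries of $X_N$ have covariance $\E(x_{ij}x_{kl})=\frac{1}{N}\delta_{il}\delta_{jk}$, so Wick's (Isserlis') theorem gives
\[
\E(x_{i_1 i_2}\dotsm x_{i_{2m-1}i_{2m}})
=\sum_{\pi\in\SP_2(m)}
  \prod_{\{k,l\}\in\pi}\frac{1}{N}\,\delta_{i_{2k-1},i_{2l}}\,\delta_{i_{2k},i_{2l-1}},
\]
which accounts for the prefactor $N^{-m/2}$ on the right-hand side (one $N^{-1}$ per pair) and kills the sum unless $m$ is even. It remains to show that, for each fixed $\pi\in\SP_2(m)$, the restricted index sum equals $\Tr_{\pi\gamma}(D^{q_1},\dots,D^{q_m})$.

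For this, label each of the $m$ factors of $X_N$ by its ``start'' index $s_k:=i_{2k-1}$ and its ``end'' index $e_k:=i_{2k}$. The factor $D^{q_k}$ imposes the transfer $e_k\to s_{k+1}$ (shift by $\gamma$), while a Wick pair $\{k,l\}\in\pi$ forces $s_k=e_l$ and $e_k=s_l$ (the involution $\pi$ acting on positions). Following a free index around, one alternates $\gamma$ and $\pi$: $k\mapsto k+1\mapsto\pi(k+1)\mapsto\pi(k+1)+1\mapsto\dotsm$, i.e., the orbit of $k$ under $\pi\gamma$. Each cycle $(k_1,\dots,k_r)$ of $\pi\gamma$ therefore carries a single surviving summation index, and summing over it yields the trace $\Tr(D^{q_{k_1}}D^{q_{k_2}}\dotsm D^{q_{k_r}})$ with the $D$-powers appearing in the cyclic order prescribed by that cycle. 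Multiplying over all cycles gives $\Tr_{\pi\gamma}(D^{q_1},\dots,D^{q_m})$, completing the derivation.

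The main obstacle is the combinatorial bookkeeping in the final step: one must verify carefully that the Wick identifications, combined with the cyclic gluing by $\gamma$, partition the $2m$ indices into independent orbits indexed by the cycles of $\pi\gamma$, and that within each orbit the $D^{q_k}$-factors are threaded in the correct cyclic order to assemble into a single trace. A convenient way to formalize this is via a bipartite ribbon graph on two copies of $\{1,\dots,m\}$ whose $\gamma$-edges link successive ``end'' and ``start'' nodes and whose $\pi$-edges record the Wick pairings; alternatively one may induct on $m$, reducing the general case to elementary transpositions of adjacent pairs in $\pi$, under which both sides of the asserted identity transform in a compatible way.
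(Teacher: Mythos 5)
The paper offers no proof of this proposition at all: it is quoted verbatim from Nica--Speicher \cite[Proposition~22.32]{NicaSpeicher:2006}, so the only comparison available is with the standard textbook argument, which is exactly the Wick-calculus computation you give. Your derivation is correct: the covariance $\E(x_{ij}x_{kl})=\tfrac1N\delta_{il}\delta_{jk}$ is the right one for a GUE matrix normalized as in the paper, the Wick expansion produces the factor $N^{-m/2}$ and the sum over $\SP_2(m)$, and the identification $s_k=e_{\pi(k)}$, $e_k=s_{\pi(k)}$ combined with the $\gamma$-gluing $e_k\to s_{k+1}$ does thread the $D^{q_k}$ along the cycles of $\pi\gamma$, yielding $\Tr_{\pi\gamma}(D^{q_1},\dots,D^{q_m})$. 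Two small points. First, a cycle of $\pi\gamma$ of length $r$ carries $r$ surviving free indices (one per link $e_{k_j}=s_{\gamma(k_{j-1})}$), not ``a single surviving summation index''; summing over all of them is what contracts the chain into $\Tr(D^{q_{k_1}}\dotsm D^{q_{k_r}})$, so the conclusion stands but the count should be fixed. Second, you candidly flag that the independence of the orbits and the cyclic ordering within each orbit is only sketched; this is where the actual content lies, and the ribbon-graph or inductive formalizations you propose are both standard and adequate, but since the paper treats the whole statement as a citation, you could equally well have done the same.
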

\begin{proof}[Proof of Proposition 3.6]
 The $m$-th nonnormalized moment  of $ X_ND_NX_N$ is then given by
 \begin{align*}
 \Tr\otimes \E[( X_ND_NX_N )^m]&=\Tr\otimes \E(\underbrace{X_N D_N  X_N I_N\dots X_N D_N X_N I_N}_{m-times})
 \intertext{where $I_N$ is the identity matrix of size $N\times N$. Put $D=D_N$ in Proposition~\ref{prop:ranodmmatrixpropSpeicher}, then $D^0=I_N$. The advantage of this
interpretation becomes apparent from the fact that in this language we
can rewrite our last equation as }
& =\sum_{\pi\in \SP_2(2m)}\Tr_{\pi\gamma}(D_N ,I_N,\dots,D_N ,I_N)N^{-m}.
 \end{align*}Now let us look at the asymptotic structure of this formula.
 We have to determine the cycles of the permutation 
$\pi\gamma$ which asymptotically  contribute a non-zero factor. Recall that
by assumption $\lim_{N\to \infty}\Tr(D_N ^m)$ exists for all $m \in \N$. In
this situation the factor $N^{-m}$ is cancelled  if and only if $\pi\gamma$ contains exactly
the $m$  singleton cycles $(2),\dots,(2m)$ and each of them contributes the factor $\Tr(I_N)=N$.
This happens if and only if $\pi=\makeatletter{}\begin{tikzpicture}[inner sep=0pt,scale=0.04]
\draw (2,0)--(2,7.5);
\draw (8,0)--(8,4.5);
\draw (14,0)--(14,4.5);
\draw (20,0)--(20,4.5);
\draw (26,0)--(26,4.5);
\node at (36,2){$\cdots$};
\draw (44,0)--(44,4.5);
\draw (50,0)--(50,4.5);
\draw (56,0)--(56,7.5);
\draw (8,4.5)--(14,4.5);
\draw (20,4.5)--(26,4.5);
\draw (44,4.5)--(50,4.5);
\draw (2,7.5)--(56,7.5);
\end{tikzpicture}
 $.  Indeed in
order to generate the singleton cycle $(2)$, the partition $\pi$ must contain
the pair $\{2,3\}$. To generate the cycle $(4)$, the pair $\{4,5\}$ must occur
in $\pi$ and so on.
It follows that  asymptotically the only non-zero contribution comes from the
pair partition $\pi=\makeatletter{}\begin{tikzpicture}[inner sep=0pt,scale=0.04]
\draw (2,0)--(2,7.5);
\draw (8,0)--(8,4.5);
\draw (14,0)--(14,4.5);
\draw (20,0)--(20,4.5);
\draw (26,0)--(26,4.5);
\node at (36,2){$\cdots$};
\draw (44,0)--(44,4.5);
\draw (50,0)--(50,4.5);
\draw (56,0)--(56,7.5);
\draw (8,4.5)--(14,4.5);
\draw (20,4.5)--(26,4.5);
\draw (44,4.5)--(50,4.5);
\draw (2,7.5)--(56,7.5);
\end{tikzpicture}
 $ which produces the
permutation $\pi\gamma=(1,3,\dots,2m-1)(2)(4)\dots (2m)$, and thus
 \begin{align*}\lim_{N\to \infty}\Tr\otimes \E(( X_ND_NX_N )^m)&=\lim_{N\to \infty}\Tr_{(1,3,\dots,2m-1)(2)(4)\dots (2m)}(D_N ,I_N,\dots,D_N ,I_N)N^{-m}\\&=\lim_{N\to \infty}\Tr(D_N^m)\times N^{m}\times N^{-m}=\int_{\R}x^m\dd{\mu(x)}. 
  \end{align*}
\end{proof}
\begin{cor}
  Let 
 $A_N = [a_{i,j}^{(N)}] \in M_N(\IC)$ be  as  in Theorem~\ref{thm:quadraticCLT}, then
   the spectral measures of  $
     \frac{1}{N}X_{N}A_NX_{N}
    $   converge with respect to the nonnormalized trace to the measure $\mu$.
        \end{cor}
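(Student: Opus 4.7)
The corollary follows as a direct specialization of Proposition~\ref{prop:randommatrixmodelnonnormalized}. The plan is simply to set $D_N := \frac{1}{N}A_N$ and verify that this sequence satisfies the hypotheses of Proposition~\ref{prop:randommatrixmodelnonnormalized}, then to read off the conclusion after factoring out the scalar $\frac{1}{N}$.

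First, I would observe that $D_N = \frac{1}{N}A_N$ is selfadjoint because $A_N$ is selfadjoint by assumption in Theorem~\ref{thm:quadraticCLT}. Moreover, the assumption in Theorem~\ref{thm:quadraticCLT} that $\frac{1}{n}A_n$ has limit distribution $\mu$ with respect to the nonnormalized trace is precisely the statement that
\[
\lim_{N\to\infty}\Tr(D_N^m) = \int_{\R} t^m\,\dd{\mu(t)}
\]
for every $m\in\IN$. Thus $D_N$ satisfies all the assumptions imposed on the deterministic matrix sequence in Proposition~\ref{prop:randommatrixmodelnonnormalized}.

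Next I would apply Proposition~\ref{prop:randommatrixmodelnonnormalized} to conclude that
\[
\lim_{N\to\infty} \Tr\otimes\E\bigl((X_N D_N X_N)^m\bigr) = \int_{\R} t^m\,\dd{\mu(t)}
\]
for every $m\in\IN$. Since $D_N = \frac{1}{N}A_N$, the left-hand side is exactly the $m$-th moment of $\frac{1}{N}X_N A_N X_N$ with respect to the nonnormalized trace, which proves the claim.

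There is no real obstacle here: the entire content of the corollary is absorbed by Proposition~\ref{prop:randommatrixmodelnonnormalized} once one recognizes that the correct normalization to extract from $A_N$ is $\frac{1}{N}$, matching the normalization already built into the hypothesis of Theorem~\ref{thm:quadraticCLT}. The boundedness condition $\sup_{i,j,n}|a_{i,j}^{(n)}|<\infty$ from Theorem~\ref{thm:quadraticCLT} is not used in this corollary; only the moment-convergence assumption on $\frac{1}{N}A_N$ enters, which is exactly what Proposition~\ref{prop:randommatrixmodelnonnormalized} consumes.
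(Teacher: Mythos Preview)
Your proposal is correct and is precisely the intended argument: the paper states this corollary without a separate proof, as it follows immediately from Proposition~\ref{prop:randommatrixmodelnonnormalized} by taking $D_N=\frac{1}{N}A_N$, exactly as you do.
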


\section{Limit Theorem  of Sums of Commutators and Anticommutators }
We will now illustrate the  limit theorem~\ref{thm:quadraticCLT}
with some interesting computable cases and start with sums of commutators and
anticommutators, the most general expression being \eqref{eq:sumcommanticomm}.

\subsection{A  Limit Theorem for commutators and anticommutators}
The main contribution of this paper is the following limit theorem
featuring the fundamental generating function of Carlitz and Scoville
\cite[(1.6)]{CarlitzScoville:1972}. 
\begin{theo}[Free generalized tangent law]\label{twr:CLTCommutatorsAniticommutators}
  Let $\X_1, \X_2,\dots, \X_n \in \A_{sa}$ be free centered copies of
  a random variable with variance $1$,
  then for any $a$, $b\in\IR$  with $a^2+b^2=1$ and $b\neq 0$,
  the limit law
  $$Q_n=\frac{1}{n}\sum_{\substack{k,j=1\\ k<j}}^n\big(a(\X_k\X_j+\X_j\X_k)+ib(\X_k\X_j-\X_j\X_k)\big)
  \xrightarrow{d} Y,$$
  has $R$-transform
  $$R_{Y}(z)=\frac{\tan(bz)}{b-a\tan(bz)}
    .
  $$
  The free cumulants are given by
   $$
  K_r(Y)
  = b^{r-1}\frac{T_r(a/b)}{r!}
  =\frac{b^{r}a}{r!} P_r(a/b)
  =
  (-1)^r\frac{b^{r}a}{r!}\cot^{(r)}(\alpha)
  .
  $$
                 where $\alpha=\arccot(a/b)$ and the polynomials
  $P_r(x)$, ${T_r(x)}$ were defined in Section \ref{ssec:tangentnumbers}. 
  
      \end{theo}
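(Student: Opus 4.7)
The plan is to recognize $Q_n$ as an instance of the general quadratic CLT of Theorem~\ref{thm:quadraticCLT} and then compute the limiting cumulants through the explicit eigenvalue formula of Lemma~\ref{lemm:spectrum}. Collecting coefficients of $X_iX_j$ and $X_jX_i$ for $i<j$ gives $a+ib$ and $a-ib$ respectively, so
\[
Q_n = \frac{1}{n}\sum_{i,j=1}^{n}a_{i,j}^{(n)}X_iX_j
\]
where $a_{i,j}^{(n)}=a+ib$ above the diagonal, $a-ib$ below, and $0$ on the diagonal: precisely the matrix $A_n$ of Lemma~\ref{lemm:spectrum}. Theorem~\ref{thm:quadraticCLT} (with Remark~\ref{rem:semicircle} reducing to standard semicircular $X_i$) then yields $Q_n\xrightarrow{d}Y$ with
\[
K_r(Y)=\lim_{n\to\infty}\frac{1}{n^r}\Tr(A_n^r)=\lim_{n\to\infty}\frac{1}{n^r}\sum_{k=0}^{n-1}\bigl(b\cot\theta_k-a\bigr)^r,\qquad \theta_k=\tfrac{\alpha+k\pi}{n},\ \alpha=\arccot(a/b).
\]

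To evaluate this limit I binomially expand $(b\cot\theta_k-a)^r$ and reduce the moment to a linear combination of cotangent power sums $S_j(n)=\sum_k\cot^j\theta_k$. The exact polynomial-in-$n$ formulas from Section~\ref{ssec:tangentnumbers} and the companion paper~\cite{EjsmontLehner:2019:cotangent} show $\deg_nS_j(n)=j$ with leading coefficient $c_j=\sum_{k\in\IZ}(\alpha+k\pi)^{-j}$, absolutely convergent for $j\geq 2$. After division by $n^r$ only the $j=r$ contribution survives, so for $r\geq 2$
\[
K_r(Y)=b^rc_r=\frac{(-1)^{r-1}b^r}{(r-1)!}\cot^{(r-1)}(\alpha)=\frac{b^rP_{r-1}(a/b)}{(r-1)!},
\]
using the classical partial-fraction expansion of $\cot$ and the identity $\cot^{(n)}\theta=(-1)^nP_n(\cot\theta)$ from Section~\ref{ssec:tangentnumbers}. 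For $r=1$ one reads off $K_1(Y)=0$ directly from $\Tr(A_n)=0$.

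Finally, with the generating function $P(x,z)=\sum_{n\geq 0}P_n(x)z^n/n!=(x+\tan z)/(1-x\tan z)$ from Section~\ref{ssec:tangentnumbers}, the formal series $\sum_{r\geq 1}b^rP_{r-1}(a/b)z^{r-1}/(r-1)!$ equals $bP(a/b,bz)=b\cot(\alpha-bz)$. Subtracting the spurious $r=1$ contribution $bP_0(a/b)=a$ (required because $K_1(Y)=0$ rather than the conditionally convergent PV value $\sum_{k}b/(\alpha+k\pi)=b\cot\alpha=a$) and applying the tangent subtraction formula with $a^2+b^2=1$ reduces $b\cot(\alpha-bz)-a$ to $\tan(bz)/(b-a\tan(bz))$, as claimed; the equivalent cumulant expressions then follow from the polynomial identity $xP_r(x)=(1+x^2)T_r(x)$. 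The main delicacy of the argument is precisely this bookkeeping at $r=1$: the naive limiting measure $\sum_{k\in\IZ}\delta_{b/(\alpha+k\pi)}$ recovers every moment of order $\geq 2$ correctly because the defining series converges absolutely, but its first moment is only conditionally convergent; the uniform shift $-a/n$ in the eigenvalues, individually $O(1/n)$ but aggregated over $n$ eigenvalues contributing $-a$, is what exactly cancels this PV contribution and shifts the generating function from $b\cot(\alpha-bz)$ down to $R_Y(z)$.
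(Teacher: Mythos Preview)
Your argument is correct and reaches the same $R$-transform, but it follows a route genuinely different from the paper's primary proof. The paper bypasses the eigenvalues entirely: it writes the characteristic polynomial of $\frac{1}{n}A_n$ in closed form as
\[
\chi_n(\lambda)=\frac{w(\lambda+\bar w/n)^n-\bar w(\lambda+w/n)^n}{w-\bar w},
\]
reads off $R_{Q_n}(z)$ from the logarithmic derivative via $\frac{1}{z}\bigl(\frac{1}{z}\chi_n'(1/z)/\chi_n(1/z)-n\bigr)$, and lets $n\to\infty$; the elementary limit $(1+c/n)^n\to e^c$ then yields $\tan(bz)/(b-a\tan(bz))$ in one line, with no cotangent-sum asymptotics whatsoever. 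Your approach instead computes each limiting cumulant $K_r(Y)=\lim_n n^{-r}\sum_k\lambda_k^r$ through the explicit eigenvalues of Lemma~\ref{lemm:spectrum} and the partial-fraction expansion of $\cot$, which is precisely the strategy the paper reserves for the special cases $a=0$ and $a=b=1/\sqrt{2}$ in Propositions~\ref{cor:tangentlaw} and~\ref{twr:zigzag}. What you gain is an explicit formula $K_r(Y)=b^rP_{r-1}(a/b)/(r-1)!$ derived from first principles; what the paper's route gains is that no estimate on cotangent power sums for general $\alpha$ is needed at all. (The paper also sketches a third proof via Newton's identities in the Remark following the theorem.)

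One caveat: you invoke ``exact polynomial-in-$n$ formulas'' for $S_j(n)=\sum_k\cot^j\theta_k$ at a general base point $\alpha$, citing Section~\ref{ssec:tangentnumbers} and the companion paper~\cite{EjsmontLehner:2019:cotangent}. Only the specific angles $\alpha=\pi/2$ and $\alpha=-\pi/4$ are quoted in \eqref{eq:S2m+pi/2}--\eqref{eq:Sm-pi/4}; you should check that the general-$\alpha$ statement is actually available there, or else argue the needed asymptotic $S_j(n)=c_jn^j+o(n^j)$ directly (this is routine for $j\geq 2$ by isolating the contributions near $\theta\approx 0$ and $\theta\approx\pi$, and for $j=0,1$ one has $S_0(n)=n$ and $bS_1(n)=na$ exactly from $\Tr A_n=0$). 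Your $r=1$ bookkeeping is correct but can be streamlined by this last observation.
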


\begin{proof}
  The system matrix is $\frac{1}{n}A_n=
  \frac{1}{n}\left[\begin{smallmatrix}
    0 &a+ib\\
    a-ib& 0
    \end{smallmatrix}\right]_n$ from Lemma~\ref{lemm:spectrum}
  and its characteristic polynomial   is 
  $$
  \chi_n(\lambda)=
  \frac{w(\lambda+\frac{\widebar{w}}{n})^n - \widebar{w}(\lambda+\frac{w}{n})^n  }{w-\widebar{w}}
  $$
  where $w=a+bi$.
  The cumulant generating function 
  \begin{equation*}
    R_{Q_n}(z) =     \sum_{k=1}^\infty \frac{\Tr(A_n^k)}{n^k} z^{k-1},
  \end{equation*}
  can be obtained from the logarithmic derivative of the characteristic
  polynomial.
  Indeed if we factorize the characteristic polynomial
  $\chi_n(\lambda) = \prod_{i=1}^n (\lambda - \lambda_i)$ then
  \begin{equation*}
    \frac{\chi_n'(\lambda)}{\chi_n(\lambda)}
    =\sum_{i=1}^n  \frac{1}{\lambda-\lambda_i}
  \end{equation*}
  and
  $$
  \frac{1}{z}     \frac{\chi_n'(1/z)}{\chi_n(1/z)} = 
  \sum_{k=0}^\infty \sum_{i=1}^n \lambda_i^k z^k
  = n + zR_{Q_n}(z)
  .
  $$
  In our case
  $$
  \frac{\chi_n'(\lambda)}{\chi_n(\lambda)} = 
  n
  \frac{w(\lambda+\frac{\widebar{w}}{n})^{n-1} - \widebar{w}(\lambda+\frac{w}{n})^{n-1}  }  {w(\lambda+\frac{\widebar{w}}{n})^n - \widebar{w}(\lambda+\frac{w}{n})^n  }
  $$
  and
  \begin{align*}
    R_{Q_n}(z) 
    &= \frac{1}{z}\left(\frac{1}{z}\frac{\chi'_n(1/z)}{\chi_n(1/z)} -n\right)\\
    &=
    \frac{n}{z}
    \left(
    \frac{w(1+\frac{z\widebar{w}}{n})^{n-1} - \widebar{w}(1+\frac{zw}{n})^{n-1}  }    {w(1+\frac{z\widebar{w}}{n})^n - \widebar{w}(1+\frac{zw}{n})^n  }
    -1
    \right)
    \\
    &=
    -\abs{w}^2
    \frac{(1+\frac{z\widebar{w}}{n})^{n-1} - (1+\frac{zw}{n})^{n-1}  }    {w(1+\frac{z\widebar{w}}{n})^n - \widebar{w}(1+\frac{zw}{n})^n  },
  \end{align*}
  and the limit is
  \begin{align*}
    \lim_{n\to\infty} R_{Q_n}(z) &=R_Y(z)=-\abs{w}^2\frac{e^{z\widebar{w}}-e^{zw}}{we^{z\widebar{w}}-\widebar{w}e^{zw}},
 \intertext{and finally substituting $w=a+ib$ ($\abs{w}=1$), we get } &=\frac{-\exp(z(a-ib))+\exp(z(a+ib))}{(a+ib)\exp(z(a-ib))-(a-ib)\exp(z(a+ib))}
\\&=\frac{2i\sin(bz)}{-2i(a\sin(bz)-b\cos(bz))}=\frac{\tan(bz)}{b-a\tan(bz)}.
  \end{align*}
  Thus the $R$-transform can be expressed in terms of the generating function of the higher order tangent numbers
  \eqref{eq:genfunTnx} as $R(z) = \frac{1}{a}T(a/b,bz)$. The rest follows from simple manipulations
  using the combinatorics of tangent numbers discussed in Section~\ref{ssec:tangentnumbers}.
\end{proof}

\begin{Rem}
  There is another proof in terms of Newton's identities,
  also known as the Newton-Girard formulae,
  which provide a relation between two
  types of symmetric polynomials, namely between power sums and elementary
  symmetric polynomials.
  Observe that
  \begin{equation*}
\chi_n(\lambda)=\sum_{j=0}^n\lambda^j
                 \binom{n}{j}
\frac{(a+ib)(a-ib)^j-(a-ib)(a+ib)^j}{2ib}
                 =: \sum_{j=0}^n\lambda^j c_j,
  \end{equation*}
 whose $n$ zeros are the numbers $\lambda_k=b\cot\frac{\alpha+k\pi}{n}-a$, $k\in\{0,\dots,n-1\}$. Let $s_r^n=\lambda_1^r+\dots +\lambda_n^r.$ By Newton's formulas for roots of a polynomial, we have for $r\in\N$

 \begin{equation*}
s_{r}^n+s_{r-1}^nc_1+\dots+s_1^nc_{k-1}+kc_k=0
 \end{equation*}
 for $k\in\{1,\dots,n+1\}$.
Dividing both sides of above equation by $n^r$ 
and passing with $n$ to infinity for every fixed $k$ we get 
\begin{equation*}
\sum_{j=0}^{k-1}\s_{r-j}
  \frac{(a+ib)(a-ib)^j-(a-ib)(a+ib)^j}{2ibj!}
+k
\frac{(a+ib)(a-ib)^k-(a-ib)(a+ib)^k}{2ibk!}
=0,
\end{equation*}
where $\s_r=\lim_{n\to \infty}s_r^n/n^r$. Recall that ${R}_Y(z)=\sum_{r=0}^\infty\s_{r+1}z^r$ and by using Cauchy product of two infinite series, we see
\begin{multline*}
R_Y(z)\left(1+\frac{(a+ib)}{2ib}(\exp(z(a-ib))-1)-\frac{(a-ib)}{2ib}(\exp(z(a+ib))-1)\right)\\=-\frac{(a+ib)(a-ib)}{2ib}\exp(z(a-ib))+\frac{(a-ib)(a+ib)}{2ib}\exp(z(a+ib)),
\end{multline*}
which after a simple computation can be written in the  desired form. 
 \end{Rem}

 \begin{Rem}
    From Proposition~\ref{prop:randommatrixmodelnonnormalized} and Theorem~\ref{twr:CLTCommutatorsAniticommutators} 
 for $C_N=\frac{1}{N}X_{N}\left[\begin{smallmatrix}
    0&a+bi\\
    a-ib& 0
    \end{smallmatrix}\right]_N X_{N}$ we obtain a random matrix approximation  of the following moment generating function (with respect to the non-normalized trace)
                 $$\lim_{N\to \infty }M_{C_N}(z)=1+\frac{z\tan(bz)}{b-a\tan(bz)}.$$
 \end{Rem}
 \subsection{The free  tangent and zigzag laws }
In this subsection we indicate  yet another method to prove the limit
theorem~\ref{twr:CLTCommutatorsAniticommutators}  in some special cases,
namely sums of commutators and anticommutators.
These are interesting because up to rescaling the limit cumulants are
equal to the tangent numbers and   Euler's zigzag numbers. 
Secondly, these cumulants are reminiscent of certain formulae
for positive integer moments random variables related to the zeta function  \cite[see last line of Table 1]{BianePitmanYor2001}.
Thirdly, we incidentally solve a problem stated in \cite{CarlitzScoville:1972}.
According to  Theorem~\ref{thm:quadraticCLT}, in the above proofs
we  can restrict our sums to pair partitions and then the sums are simply
traces of powers of the matrix.
\begin{prop}[Free tangent law]
  \label{cor:tangentlaw}
 Let $\X_1, \X_2,\dots, \X_n \in \A_{sa}$ be free copies of
a random variable with  variance $1$,
 then 
 $$
 Q_n=\frac{1}{n}\sum_{\substack{k,j=1\\ k<j}}^n i(\X_k\X_j-\X_j\X_k)
 \xrightarrow{d} Y,
 $$
where $R_{Y}(z)=\tan(z)$. 
 We call the limit law  $\mu_Y$ the \emph{free tangent law}.
\end{prop}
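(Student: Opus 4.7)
The plan is to deduce this proposition as the immediate special case of Theorem~\ref{twr:CLTCommutatorsAniticommutators} obtained by choosing parameters $a=0$ and $b=1$. First I would check that these parameters satisfy the hypotheses $a^2+b^2=1$ and $b\neq 0$, and observe that with $a=0$ the anticommutator contribution $a(X_kX_j+X_jX_k)$ vanishes, leaving precisely the quadratic form
$$
Q_n=\frac{1}{n}\sum_{k<j}i(X_kX_j-X_jX_k)
$$
appearing in the statement.

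A small technical wrinkle is that the proposition merely says ``free copies'' rather than ``free centered copies'' as in the hypothesis of Theorem~\ref{twr:CLTCommutatorsAniticommutators}. I would dispose of this by replacing each $X_k$ with the centered variable $X_k':=X_k-\tau(X_k)$; the commutator $X_kX_j-X_jX_k$ is invariant under this scalar shift, the variables $X_k'$ remain free (translation by scalars does not alter the generated von Neumann subalgebras) and still have variance~$1$, so one may assume the centered hypothesis without loss of generality.

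Finally, I would substitute $a=0$, $b=1$ into the closed-form $R$-transform
$$
R_Y(z)=\frac{\tan(bz)}{b-a\tan(bz)}
$$
from Theorem~\ref{twr:CLTCommutatorsAniticommutators} and obtain $R_Y(z)=\tan z$ directly. There is no genuine obstacle here: the substantive analytic work---computing the spectrum of the system matrix in Lemma~\ref{lemm:spectrum}, identifying the limiting generating function with that of Carlitz and Scoville, and invoking the general quadratic-form limit theorem~\ref{thm:quadraticCLT}---has already been carried out in the proof of the more general result. The only thing worth writing out explicitly is the reduction to centered variables, which is routine.
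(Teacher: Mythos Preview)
Your deduction is correct: setting $a=0$, $b=1$ in Theorem~\ref{twr:CLTCommutatorsAniticommutators} is legitimate, and your reduction to centered variables via the scalar-shift invariance of the commutator is the right way to close the gap between the hypotheses. Nothing is missing.

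However, the paper deliberately does \emph{not} argue this way. As announced at the start of Section~4.2, the point of this subsection is to exhibit ``yet another method'' for special cases of Theorem~\ref{twr:CLTCommutatorsAniticommutators}. The paper's proof is a self-contained computation: it invokes the cancellation phenomenon and Remark~\ref{rem:semicircle} to reduce to semicircular $X_i$, applies \eqref{eq:cumQnsemi} to get $K_r(Q_n)=\Tr(A_n^r)$, reads off the eigenvalues $\lambda_k=\cot\bigl(\tfrac{\pi}{2n}+\tfrac{k\pi}{n}\bigr)$ from Lemma~\ref{lemm:spectrum}, and then evaluates the power sums $\sum_k\lambda_k^{2m}$ using the cotangent identity \eqref{eq:S2m+pi/2} from the companion paper, extracting the tangent number $T_{2m-1}/(2m-1)!$ as the leading coefficient. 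Your route is shorter and logically sufficient; the paper's route is independent of the generating-function manipulation in Theorem~\ref{twr:CLTCommutatorsAniticommutators} and instead showcases the cotangent-sum machinery, making the appearance of the tangent numbers as limiting cumulants completely explicit at the level of eigenvalue asymptotics.
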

\begin{proof}
  First observe that by virtue of
  the cancellation phenomenon see \cite[Theorem 4.4]{EjsmontLehner:2019:commutators}, we may assume without loss of generality that
  $X_i$ are even random variables and moreover by Remark~\ref{rem:semicircle}
  that they are semicircular.
  Thus the cumulants  can be computed using formula \eqref{eq:cumQnsemi} 
and evaluate to
$$K_r\big( \sum_{\substack{k,j=1\\ k<j}}^n i(\X_k\X_j-\X_j\X_k)\big)=\Tr(A_n^r)
\text{ where } 
A_n=\left[\begin{smallmatrix}
    0 &i \\
    -i& 0
    \end{smallmatrix}\right]_n. 
$$
The eigenvalues of the matrix $A_n$ were computed in Lemma~\ref{lemm:spectrum}  and they  are
$
\lambda_k=\cot\left(\frac{\pi}{2n}+\frac{k}{n}\pi\right)$ for $k\in\{0,\dots,n-1\} 
$ (including repeated eigenvalues), 
hence the odd cumulants vanish and the even
cumulants evaluate to
\begin{align*}
  K_{2m}\big(\sum_{\substack{k,j=1\\ k<j}}^n i(\X_k\X_j-\X_j\X_k)\big)
  &=\sum_{k=0}^{n-1} \cot^{2m}\left(\frac{\pi}{2n}+\frac{k}{n}\pi\right)\\
      & = (-1)^mn 
    + \frac{1}{(2m-1)!}   \sum_{k=1}^{m} n^{2k} A_{2m}^{(2k)}\,T_{2k-1}\\
    &= n^{2m} \frac{T_{2m-1}}{(2m-1)!} + \mathcal{O}(n^{2m-2})
\end{align*}
where we used formula   \eqref{eq:S2m+pi/2}, with $A_{2m}^{(2m)}=1$.
Hence  
\begin{equation*}
\lim_{n\to \infty}K_{2m}(Q_n)= \frac{T_{2m-1}}{(2m-1)!}
\end{equation*}
and we conclude that
 $$\lim_{n\to \infty}R_{Q_n}(z)=
 \tan(z).$$
\end{proof}

\begin{prop}[Free zigzag law]
 Let $\X_1, \X_2,\dots, \X_n \in \A_{sa}$ be free  copies 
of a centered  random variable with  variance $1$,
 then 
 $$Q_n=\frac{1}{{2}n}
 \sum_{\substack{k,l=1\\ k<l}}^n\big(\X_k\X_l+\X_l\X_k+i(\X_k\X_l-\X_l\X_k)\big)
  \xrightarrow{d} Y,$$
  where $R_{Y}(z)=\frac{1}{2}(\tan(z)+\sec(z)-1)$.
  The density of this law is shown in Fig.~\ref{fig:ziglaw}.
\label{twr:zigzag} 
\end{prop}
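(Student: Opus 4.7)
The plan is to mirror the proof of Proposition~\ref{cor:tangentlaw} almost verbatim, with the spectral parameters specialized to $a=b=1/2$. First, by Remark~\ref{rem:semicircle}, I may assume the $X_i$ are free standard semicircular variables, in which case formula~\eqref{eq:cumQnsemi} reduces the computation to $K_r(Q_n) = \frac{1}{n^r}\Tr(A_n^r)$, where
$$
A_n = \left[\begin{smallmatrix} 0 & (1+i)/2 \\ (1-i)/2 & 0 \end{smallmatrix}\right]_n
$$
is the system matrix of the quadratic form, corresponding to the specialization $a=b=1/2$ of Theorem~\ref{twr:CLTCommutatorsAniticommutators}.

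Second, I invoke Lemma~\ref{lemm:spectrum} with $\alpha=\arccot(1)=\pi/4$ to obtain the eigenvalues of $A_n$ explicitly as $\lambda_k = \tfrac{1}{2}\cot\frac{(4k+1)\pi}{4n}-\tfrac{1}{2}$ for $0\le k\le n-1$. Using $\cot(\pi-\theta)=-\cot\theta$ together with the reindexing $k\mapsto n-1-k$, the sum $\sum_{k=0}^{n-1}\cot^j\frac{(4k+1)\pi}{4n}$ transforms into $(-1)^j\sum_{k=1}^n\cot^j\frac{(4k-1)\pi}{4n}$, which is exactly the form evaluated in~\eqref{eq:Sm-pi/4}.

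Third, I expand $\lambda_k^r$ via the binomial theorem and sum termwise. Since the inner cotangent sum is of order $n^j$, only the pure $\cot^r$ term contributes at order $n^r$; combining the leading coefficient from~\eqref{eq:Sm-pi/4} (using $A_r^{(r)}=1$) with the factor $(1/2)^r$ yields
$$
\lim_{n\to\infty}\frac{1}{n^r}\Tr(A_n^r) = \frac{E_{r-1}}{2(r-1)!},\qquad r\ge 2,
$$
while $K_1(Q_n)=0$ holds trivially since $A_n$ has vanishing diagonal (equivalently, exact cancellation between the constant shift $-\tfrac12$ and the degree-one cotangent sum). Recognizing the coefficients $E_r/(2\,r!)$ as the Taylor coefficients of $\tfrac12(\tan z+\sec z-1)$ yields the claimed $R$-transform.

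The main bookkeeping obstacle is verifying that every $j<r$ contribution from the binomial expansion, together with the $O(n^{j-1})$ remainder in~\eqref{eq:Sm-pi/4}, is genuinely of order $o(n^r)$ after dividing by $n^r$. Should this route be judged too laborious, a clean alternative is to apply Theorem~\ref{twr:CLTCommutatorsAniticommutators} directly to $\sqrt{2}\,Q_n$ (which corresponds to $a=b=1/\sqrt 2$ and hence satisfies $a^2+b^2=1$), rescale the resulting $R$-transform, and invoke the half-angle identity
$$
\frac{\tan(z/2)}{1-\tan(z/2)} = \tfrac{1}{2}(\tan z + \sec z - 1),
$$
which is readily verified by clearing denominators using $\sin z=2\sin(z/2)\cos(z/2)$ and $1+\cos z=2\cos^2(z/2)$.
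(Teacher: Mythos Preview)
Your proposal is correct and follows essentially the same route as the paper: reduce to semicircular variables via Remark~\ref{rem:semicircle}, read off the eigenvalues from Lemma~\ref{lemm:spectrum}, expand $\lambda_k^r$ by the binomial theorem, and extract the leading term using~\eqref{eq:Sm-pi/4}. The paper makes the marginally cleaner choice of the complex-conjugate matrix (equivalently $b=-1/2$, same spectrum), so the eigenvalues land directly on the nodes $(4k-1)\pi/(4n)$ of~\eqref{eq:Sm-pi/4} and your reindexing step is bypassed (the correct substitution there is $k\mapsto n-k$, not $n-1-k$); your alternative via Theorem~\ref{twr:CLTCommutatorsAniticommutators} and the half-angle identity is precisely the content of the Remark the paper places after the proposition.
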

\begin{proof}

The matrix $A_n=\frac{1}{2}\left[\begin{smallmatrix}
    0 &1-i \\
    1+i& 0
    \end{smallmatrix}\right]_n$ 
corresponds to  $\alpha=\arccot(-1)=-\frac{\pi}{4}$ and
Lemma~\ref{lemm:spectrum} yields
$$
\lambda_k=-\frac{1}{{2}}\cot\left(-\frac{\pi}{4n}+\frac{k}{n}\pi\right)-\frac{1}{{2}}, \text{ for }
  k\in\{1,\dots,n\},
$$ 
where range of the index variable $k$ is shifted to $\{1,\dots,n\}$.
By the binomial  theorem applied  for  $r\geq 2$, we see that
\begin{align*}
\sum_{k=1}^n\left(-\cot\left(-\frac{\pi}{4n}+\frac{k}{n}\pi\right)-1\right)^r&=(-1)^{r}\sum_{j=0}^r {r\choose j}\left(\sum_{k=1}^n\cot^{r-j}\left(-\frac{\pi}{4n}+\frac{k}{n}\pi\right)\right)
\\
&=\frac{E_{r-1}}{(r-1)!}2^{r-1}n^{r}+\mathcal{O}(n^{r-1})
.\end{align*}
by  \eqref{eq:Sm-pi/4}.

Finally for  $r\geq 2$, we get 
\begin{align*}
\tilde{K}_r&=\lim_{n\to
  \infty}K_r(Q_n)
  \\
  &=\lim_{n\to\infty}\frac{1}{n^r}\sum_{k=1}^n\lambda_k^r=\lim_{n\to\infty}\frac{1}{({2}n)^r}\sum_{k=1}^n\left(-\cot\left(-\frac{\pi}{4n}+\frac{k}{n}\pi\right)-1\right)^r
  \\
  &=\frac{E_{r-1}}{2(r-1)!}. 
  \end{align*}
   The first cumulant is $\tilde{K}_1=\frac{1}{2n}\Tr\left(\left[\begin{smallmatrix}
    0 &1-i \\
    1+i& 0
    \end{smallmatrix}\right]_n \right)=0$
     and hence the 
desired $R$-transform is
$$R_{Y}(z)=\sum_{r=0}^\infty \tilde{K}_{r+1}z^r=\frac{1}{2}\sum_{r=1}^\infty \frac{E_{r}}{r!}z^r=\frac{\tan(z)+\sec(z)-1}{2}.$$
\end{proof}

\begin{Rem}
   The above results coincide with Theorem \ref{twr:CLTCommutatorsAniticommutators}. Indeed, if we use use the scaling  appropriate  for Theorem \ref{twr:CLTCommutatorsAniticommutators}, i.e., $\frac{1}{\sqrt{2}n}$,  then \begin{align*}
 R_{\sqrt{2}Y}(z)&=\frac{\tan(\sqrt{2}z)+\sec(\sqrt{2}z)-1}{\sqrt{2}}
\intertext{by the identity $\tan(z)+\sec(z)=\frac{1+\tan(z/2)}{1-\tan(z/2)}$, we have }
&=\frac{\tan(z/\sqrt{2})}{1/\sqrt{2}-\tan(z/\sqrt{2})/\sqrt{2}}.
\end{align*}
It is interesting to compare the
power series expansion of Theorem~\ref{twr:CLTCommutatorsAniticommutators} for
$a=b=\frac{1}{\sqrt{2}}$ with
$\frac{\tan(\sqrt{2}z)+\sec(\sqrt{2}z)-1}{\sqrt{2}}$, because
it shows the identity
$$
\sum_{k=0}^{n-1}T_n^{(k+1)}=2^{n-1}E_n
.
$$
This provides a new answer to a question of Carlitz and Scoville
\cite[equ.~(2.19) on p.~418]{CarlitzScoville:1972}
who assert that ``the numbers  $\sum_{k=0}^{n-1}T_n^{(k+1)}$ are not easily
evaluated''; see \cite[Prop.~6]{Cvijovic:2011:higher} for another proof.
This sequence is catalogued as $A000828$ in Sloane's database
\cite{Sloane} and the numbers are half of the \emph{Euler numbers of type $B$}, see
\cite{Ma:2014:gamma}.
\end{Rem}
\section{Spectral radius, density, L\'evy-Khinchin representation and Bercovici-Pata bijection of the tangent laws}
 \subsection{The spectral radius of the tangent law}
\begin{prop}
\label{prop:tanlawspectralradius}
    The spectral radius of the tangent law (the limit law of
    Corollary~\ref{cor:tangentlaw}) is given by
    $$
    \rho = \frac{1}{\dottie}(1+\sqrt{1-\dottie^2}) \simeq   2.2644374158937358461
    $$
    where $\dottie\approx  0.7390851332$ is the iterated cosine constant,
    i.e.,
    the unique fixed point of the equation $x=\cos x$.
\end{prop}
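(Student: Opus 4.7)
My plan is to read off the spectral radius from the inverse Cauchy transform $K(z) = G_\mu^{-1}(z) = \frac{1}{z} + R_Y(z) = \frac{1}{z} + \tan z$. The general principle (for a compactly supported measure) is that the right endpoint of the support equals the minimum of $K$ on the branch of the positive real axis along which $K$ maps $(0,\cdot)$ diffeomorphically onto the complement of the support; the edge of the spectrum corresponds to a critical point of $K$ where $G$ develops a square-root branch point.

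Concretely, I would first locate the critical points of $K$. Differentiating gives
\begin{equation*}
K'(z) = -\frac{1}{z^2} + \sec^2 z,
\end{equation*}
so $K'(z)=0$ is equivalent to $\cos^2 z = z^2$, i.e.\ $\cos z = \pm z$. On $(0,\pi/2)$ the only real solution is the Dottie number $z = \dottie$, the unique fixed point of $\cos$. A sign check confirms that $K$ is decreasing on $(0,\dottie)$ (where $1/z^2 > \sec^2 z$) and increasing on $(\dottie,\pi/2)$, with $K(z)\to +\infty$ at both endpoints, so $K$ attains a global minimum on $(0,\pi/2)$ at $z=\dottie$.

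Next I would evaluate this minimum. Using $\cos\dottie = \dottie$ I get $\sin\dottie = \sqrt{1-\dottie^{2}}$ and hence $\tan\dottie = \sqrt{1-\dottie^{2}}/\dottie$, so
\begin{equation*}
\rho := K(\dottie) = \frac{1}{\dottie} + \tan\dottie = \frac{1+\sqrt{1-\dottie^{2}}}{\dottie},
\end{equation*}
which is the claimed numerical value. By the general principle, $\rho$ is the right edge of the support of $\mu_Y$; since $R_Y(z)=\tan z$ is odd, $\mu_Y$ is symmetric about $0$, so the support is contained in $[-\rho,\rho]$ with $-\rho$ as the left edge, and hence $\rho$ is the spectral radius.

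The part that needs the most care is justifying that this local minimum of $K$ genuinely corresponds to the edge of the spectrum of $\mu_Y$, which amounts to showing that $\mu_Y$ is compactly supported and that the branch of $K$ on $(0,\dottie)$ really is the compositional inverse of $G_{\mu_Y}$ restricted to $(\rho,\infty)$. Since the free tangent law is infinitely divisible and the free cumulants $K_{2m}=T_{2m-1}/(2m-1)!$ grow like $(2/\pi)^{2m}(2m)!$, standard estimates on the radius of convergence of $R_Y$ together with Stieltjes inversion give the required analytic extension of $G_{\mu_Y}$ to $\IC\setminus[-\rho,\rho]$ and identify the branch point at $z = \dottie$ with the boundary of the spectrum, closing the argument.
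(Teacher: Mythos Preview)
Your proposal is correct and follows essentially the same route as the paper: compute the derivative of $K(t)=\frac{1}{t}+\tan t$, identify the critical point as the Dottie number via $\cos^2 t=t^2$, and evaluate $K(\dottie)$. The only noteworthy difference is that where you sketch an ad hoc analytic-continuation argument for why the minimum of $K$ gives the edge of the spectrum, the paper dispatches this more cleanly by invoking Pringsheim's theorem (nonnegative moments force the principal singularity onto the positive real axis) together with the standard identity $\rho=\inf_{t>0}K(t)$, citing \cite[Ch.~9.C]{Woess:2000:random}.
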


\begin{proof}
  Since the moments are nonnegative, Pringsheim's theorem
  (see \cite[Sec.~7.21]{Titchmarsh:1939:theory} or
     \cite[Sec.~3.6]{Markusevich:1967:teoria1})
  implies that the principal singularity of the Cauchy
  transform lies on the positive real axis and
  the spectral radius can be computed as
  $$
  \rho = \inf_{t>0} K(t)
  $$
  see \cite[Ch.~9.C]{Woess:2000:random}.
    In order to compute the minimum of the function
  $$
  K(t) = \frac{1}{t}+\tan t
  $$
  we compute 
  the roots of its derivative
  $$
  K'(t) = -\frac{1}{t^2} + \frac{1}{\cos^2t}
  .
  $$
  The unique root satisfies the equation $\cos^2t=t^2$, i.e., $t=\pm\dottie$ and thus
  $$
  \rho = \frac{1}{\dottie} + \frac{\sin\dottie}{\cos\dottie} 
    = \frac{1}{\dottie}
    \left(
      1 + \sqrt{1-\dottie^2}
    \right)
  $$
\end{proof}

\begin{Rem}
  The number $\dottie$ (Armenian letter ``ayb'') comes up from time to time in
  the literature, starting at least back in the 19th century in the 4th edition
  of Bertrands \emph{Trait\'e d'alg\`ebre} \cite{Bertrand:1865:traite},
  continuing with numerical efforts by T.H.~Miller \cite{Miller:1890:numerical}
  and the dedicated investigation by G.B.~Arakelian \cite{Arakelian:1981:fundamental}.
  This number is well known among generations of high school students who saw
  it appear on their electronic calculators when they started to
  repeatedly press the ``cos'' button during boring math classes,
  see \cite{Kaplan:2007:dottie,Salov:2012:dottie} for discussions.
\end{Rem}

\subsection{The spectral radius of the generalized free tangent laws} 
\begin{prop}
    The spectral radius of the generalized limit law from
    Theorem~\ref{twr:CLTCommutatorsAniticommutators} for $a+ib=e^{i\alpha}$,
    where $0<\alpha<\pi$ is given by
    $$
    \rho_\alpha = \frac{1}{\dottie_\alpha}(\sin\alpha+\sin \dottie_\alpha)
    $$
    where $\dottie_\alpha$ is the unique solution $x$ of the equation
    $$
    x=\sin(\alpha-x)
    .
    $$
    The dependency of the spectral radius on the parameter $\alpha$ is shown in Figure~\ref{fig:specradius}.
\end{prop}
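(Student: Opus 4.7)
The plan is to mimic the strategy from Proposition \ref{prop:tanlawspectralradius}: since all free cumulants $K_r(Y)$ are real, Pringsheim's theorem will again reduce the computation of the spectral radius to a one-dimensional optimization
$$
\rho_\alpha = \inf_{t>0} K(t), \qquad K(t) = \frac{1}{t} + R_Y(t),
$$
where $R_Y(z) = \tan(bz)/(b - a\tan(bz))$ is the $R$-transform from Theorem~\ref{twr:CLTCommutatorsAniticommutators}. I would first check (or invoke) that the nonnegativity of the coefficients $K_r(Y)$ still holds in the present range of $\alpha$, using the combinatorial expression $K_r(Y) = b^{r-1}T_r(a/b)/r!$, so that Pringsheim applies.

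The key observation is that with $a = \cos\alpha$, $b = \sin\alpha$ the sine subtraction formula gives a dramatic simplification:
$$
b - a\tan(bt) = \frac{\sin\alpha\cos(bt) - \cos\alpha\sin(bt)}{\cos(bt)} = \frac{\sin(\alpha - bt)}{\cos(bt)},
$$
so that
$$
K(t) = \frac{1}{t} + \frac{\sin(bt)}{\sin(\alpha - bt)}.
$$
This is the generalized counterpart of $1/t + \tan t$ in the previous proposition and is well-defined and real for $0 < t < \alpha/b$.

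Next I would differentiate, using the sine addition identity
$$
\sin(bt)\cos(\alpha - bt) + \cos(bt)\sin(\alpha - bt) = \sin\alpha,
$$
to obtain
$$
K'(t) = -\frac{1}{t^2} + \frac{b \sin\alpha}{\sin^2(\alpha - bt)} = -\frac{1}{t^2} + \frac{\sin^2\alpha}{\sin^2(\alpha - bt)}.
$$
Setting $K'(t) = 0$ and taking positive roots yields $\sin(\alpha - bt) = t\sin\alpha$. Introducing $\dottie_\alpha := bt = t\sin\alpha$, this equation becomes exactly
$$
\dottie_\alpha = \sin(\alpha - \dottie_\alpha).
$$
For uniqueness of $\dottie_\alpha$ in $(0,\alpha)$ I would observe that the left-hand side is strictly increasing from $0$ to $\alpha$ while $x \mapsto \sin(\alpha - x)$ is strictly decreasing from $\sin\alpha>0$ to $0$ on the same interval, so the curves meet exactly once; outside $(0,\alpha)$ we have $\sin(\alpha-x)\le 0<x$, ruling out further positive solutions.

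Finally, since $K(t)\to+\infty$ both as $t\to 0^+$ and as $bt\to\alpha^-$ (the latter because $\sin(\alpha-bt)\to 0^+$), the unique critical point in $(0,\alpha/b)$ must be the global minimum, and substituting back,
$$
\rho_\alpha = K\!\left(\tfrac{\dottie_\alpha}{\sin\alpha}\right) = \frac{\sin\alpha}{\dottie_\alpha} + \frac{\sin\dottie_\alpha}{\dottie_\alpha} = \frac{1}{\dottie_\alpha}(\sin\alpha + \sin\dottie_\alpha),
$$
as desired. The main obstacle, minor compared to the previous proposition, is recognizing the trigonometric simplification $b - a\tan(bt) = \sin(\alpha-bt)/\cos(bt)$ and the complementary identity that makes $K'(t)$ collapse to the clean form $-1/t^2 + \sin^2\alpha/\sin^2(\alpha-bt)$; once this is in place, the Dottie-type fixed-point equation emerges naturally and everything else is monotonicity and substitution.
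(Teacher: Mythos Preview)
Your approach is essentially identical to the paper's: the paper also simplifies $K(t)=\frac{1}{t}+\frac{\sin bt}{\sin(\alpha-bt)}$, computes $K'(t)=-\frac{1}{t^2}+\frac{\sin^2\alpha}{\sin^2(\alpha-bt)}$, and substitutes $x=bt$ to obtain the fixed-point equation $x=\sin(\alpha-x)$. You are in fact more thorough than the paper, supplying the uniqueness argument for $\dottie_\alpha$, the boundary behavior guaranteeing a global minimum, and the explicit back-substitution yielding $\rho_\alpha$; the paper omits all of these. One small quibble: Pringsheim's theorem in the form used in Proposition~\ref{prop:tanlawspectralradius} is stated for nonnegative \emph{moments}, not cumulants, so your parenthetical ``check (or invoke) that the nonnegativity of the coefficients $K_r(Y)$ still holds'' is aimed at the wrong sequence---but the paper's proof simply invokes the previous proposition without revisiting this point at all, so you are no less rigorous than the original.
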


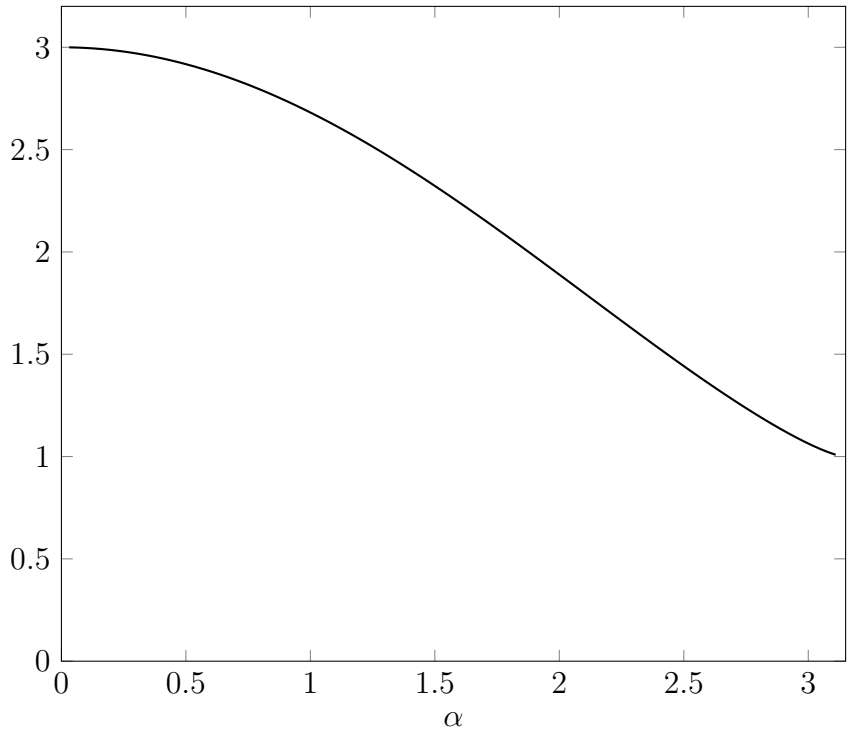
\begin{figure}
\centering{}
\tikzsetnextfilename{specradiuspos}
\begin{tikzpicture}
 \begin{axis}[xlabel={$\alpha$},
  width=0.7\textwidth,
 xmin=0,
 xmax=3.15,
 ymin=0,
 ymax=3.2
 ]
 \addplot[thick] table {specradiuspos.dat};
 \end{axis}
\end{tikzpicture}
\caption{The spectral radius of the generalized free tangent laws}
\label{fig:specradius}
\end{figure}

\begin{proof}
  We proceed as in the proof of Proposition~\ref{prop:tanlawspectralradius},
  the objective function now being
    $$
    K(t) = \frac{1}{t} + \frac{\tan bt}{b-a\tan bt}
    = \frac{1}{t} + \frac{\sin bt}{\sin\alpha\cos bt -\cos\alpha\sin bt}
    = \frac{1}{t} + \frac{\sin bt}{\sin(\alpha-bt)}
    .
    $$
    Its derivative is
    $$
    K'(t) = -\frac{1}{t^2} + \frac{\sin^2\alpha}{\sin^2(\alpha-bt)}
    $$
    and setting $x=bt$ the infimum is attained at the unique positive solution of the equation
    $$
    x=\sin(\alpha-x)
    .
    $$
\end{proof}

\subsection{The L\'evy measure of the tangent law}
The tangent function is a prominent positive definite function, see
\cite{Donoghue:1974:monotone}, and it is a fundamental example  of Nevanlinna functions.
Thus the free tangent law is $\boxplus$-infinitely divisible
and its L\'evy measure can be computed using the method from
Section~\ref{ssec:FID}.
To this end we consider the Voiculescu transform $\phi(z)=\tan(\frac{1}{z})$.
The nontangential limit of its imaginary part is
\begin{equation*}
\lim_{\epsilon \to 0^+}\Im\phi(x+i\epsilon)=\lim_{\epsilon \to
  0^+}\Im\tan\left(\frac{1}{x+i\epsilon}\right)=0 
  \quad\text{for } x\neq \frac{1}{\frac{\pi}{2}+k\pi},
\end{equation*}
and  thus  the L\'evy measure has no continuous part 
\cite[Theorem~F.6]{Schmudgen:2012:unbounded}.
In order to determine the atoms we compute the nontangential limits 
\eqref{eq:atoms}.
Now
\begin{equation*}
\lim_{\epsilon \to 0^+}i\epsilon\tan\left(\frac{1}{x+i\epsilon}\right)=0
\end{equation*}
whenever $x$ is not a pole of $\tan(1/x)$, i.e., $x\neq \frac{1}{\frac{\pi}{2}+k\pi}$, $k\in\IZ$.
On the other hand 
for $x=\frac{1}{\frac{\pi}{2}+k\pi}$ we get via de L'Hospital's rule
\begin{equation*}
\lim_
{\epsilon \to 0^+}i\epsilon\tan\left(\frac{1}{x+i\epsilon}\right)=\lim_
{\epsilon \to 0^+}\frac{i\epsilon}{\cot\left(\frac{1}{x+i\epsilon}\right)}=\lim_
{\epsilon \to 0^+}\frac{i}{\frac{-1}{\cos^2\left(\frac{1}{x+i\epsilon}\right)}\frac{-i}{(x+i\epsilon)^2}}=x^2.
\end{equation*}
Finally from \eqref{eq:atoms} we infer that the L\'evy measure is given by
$$
\nu(\{x\}) =
\begin{cases}
1& \text{ for }x=\frac{2}{n\pi} \text{ with  }n\in\IZ \text{ odd},\\
0 & \text{otherwise,}
\end{cases}
$$
which is recently confirmed by Jurek \cite{Jurek:2020}. 
Alternatively, this result can be verified as follows. 
From the well-known identity
$\sum_{n\in \IN\text{ odd}} \frac{1}{n^2}=\frac{\pi^2}{8}$ we conclude that
the tangent distribution has free characteristic triplet $(0,0,\nu)$ and we have
\begin{align*}
\Rtrans_{\mu}(z)&=\int_{\R}\left(\frac{1}{1-xz}-1-xz\mathbf{1}_{\{\abs{x}<1\}}(x)\right)\dd{\nu(x)}
\\&=\int_{\R}\left(\frac{(xz)^2}{1-xz}\right)\dd{\nu(x)}\\
&= \sum_{n\in \IZ\text{ odd}} \frac{1}{1-\frac{2z}{n\pi}} \frac{4z^2}{n^2\pi^2}\\
&= \sum_{n\in \IN\text{ odd}} \frac{2}{1-\frac{4z^2}{n^2\pi^2}} \frac{4z^2}{n^2\pi^2}\\
&= \sum_{n\in \IN\text{ odd}} \frac{8z^2}{n^2\pi^2-4z^2}.
\end{align*}
Now Euler's well known partial fraction
expansion of the contangent function \cite[Ch.~25]{AignerZiegler:2014:BOOK5ed}
$$
\cot z = \frac{1}{z} + \sum_{k=1}^\infty \frac{2z}{z^2-k^2\pi^2}
$$
immediately yields a similar expansion for the tangent function
$$
\tan z = \cot z  - 2\cot 2z = \sum_{k=1}^\infty \frac{8z}{(2k-1)^2\pi^2-4z^2}
$$
for $z\neq 0 $ and thus indeed $\frac{1}{z}\Rtrans_{\mu}(z) = \tan(z)$.

\subsection{The L\'evy measure of the generalized tangent laws} 
 The corresponding L\'evy measure in the general case is supported on the points $x=\frac{b}{\arctan(b/a)+ k\pi} \text{ for   }k\in\IZ$, with weight $1$. This follows from the fact that $\lim_
{\epsilon \to
  0^+}i\epsilon\frac{\tan(\frac{b}{x+i\epsilon})}{b-a\tan(\frac{b}{x+i\epsilon})}=x^2$.
 We leave the formal proof to the reader.

\subsection{The density of the tangent law}
 The free characteristic triplet of the free tangent law is
 $a=0$ and $\nu(\R)=\infty$ and it follows 
 from  the criterion \cite[Theorem 3.4 part (1)]{HasebeSakuma2017}
 that the free tangent law is absolutely continuous  with respect to Lebesgue
 measure.  Moreover, Huang (see
 \cite[Theorem 3.10]{Huang2012} or \cite{Huang2015}) derived a formula for the
 absolutely continuous part $\mu^{ac}$ by using the  transform
 $F_\mu^{-1}(z)=z +\tan(\frac{1}{z})$. Define a continuous map
 on $\R$ by
\begin{align*}
  v_{\mu}(x):&=\inf \{y>0\mid \Im(F_\mu^{-1}(x+iy))>0\} \\
             &=\inf \Big \{y>0\mid
 y -
\frac{\sinh\frac{2y}   {x^2+y^2}
   }{
      \cosh\frac{2y}{x^2+y^2} + \cos\frac{2x}{x^2+y^2}}
>0\Big \}.
\end{align*} 
Huang proved that we can define $\psi_\mu(x)=F_\mu^{-1}(x+iv_{\mu}(x)), \text{ for }x\in\R$,
which is a homeomorphism of $\R$ and then we have 
$$\frac{\dd{\mu^{ac}}}{\dd{x}}(\psi_\mu(x))=\frac{v_{\mu}(x)}{\pi(x^2+v_{\mu}^2(x))}.$$
The densities of the free tangent law and the free zigzag laws are shown in
Figures~\ref{fig:tanlaw} and~\ref{fig:ziglaw}; the densities
of the generalized free tangent laws for $0<\alpha<\pi$ are shown in Figure~\ref{fig:tanlaw3D}.
For comparison we also provide some histograms of the empirical spectral measures of the
random matrix model from Corollary~\ref{corrandommatrixmodel2}.

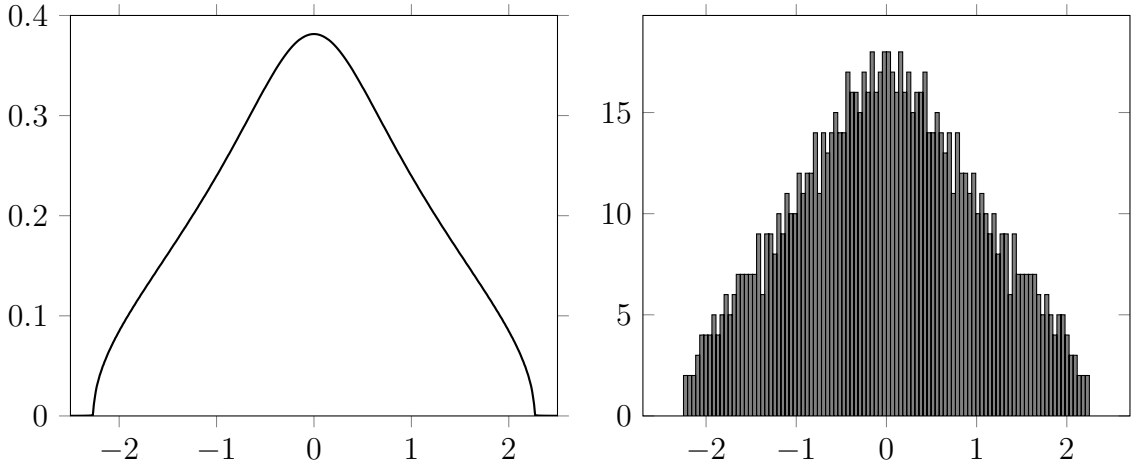
\begin{figure}
\centering{}
\tikzsetnextfilename{tanlaw_fricas_500}
\begin{tikzpicture}
 \begin{axis}[   width=0.47\textwidth,tick align=outside,
   xmin=-2.5,
   xmax=2.5,
   ymin=0,
   ymax=0.4,
   ]
   \addplot[thick] table {tanlaw_fricas_500.dat};
\end{axis}
\end{tikzpicture}
\tikzsetnextfilename{tanlaw_histogram_1000}
\begin{tikzpicture}
\begin{axis}[
   width=0.47\textwidth,
    ybar,
    ymin=0
]
\addplot +[fill=gray,draw=black,
    hist={
        bins=100,
        data min=-2.25,
        data max=2.25
    }   
] table [y index=0] {clt-tangent-1000x1000.dat};
\end{axis}
\end{tikzpicture}
  \caption{Density and histogram ($M=1000$, $N=1000$) of the free tangent law}
  \label{fig:tanlaw}
\end{figure}

\begin{figure}
\centering{}
\tikzsetnextfilename{ziglaw_fricas_500}
\begin{tikzpicture}
  \begin{axis}[
    width=0.47\textwidth,tick align=outside,
    ymin=0,
              ]
  \addplot[thick] table {ziglaw_fricas_500.dat};               
  \end{axis}
\end{tikzpicture}
\tikzsetnextfilename{zigzag_histogram_1000}
\begin{tikzpicture}
\begin{axis}[
   width=0.47\textwidth,
    ybar,
    ymin=0
]
\addplot +[fill=gray,draw=black,
    hist={
        bins=100,
        data min=-1.5,
        data max=2
    }   
] table [y index=0] {clt-zigzag2-1000x1000.dat};
\end{axis}
\end{tikzpicture}
  \caption{Density and histogram ($M=1000$, $N=1000$) of the free zigzag law}
  \label{fig:ziglaw}
\end{figure}
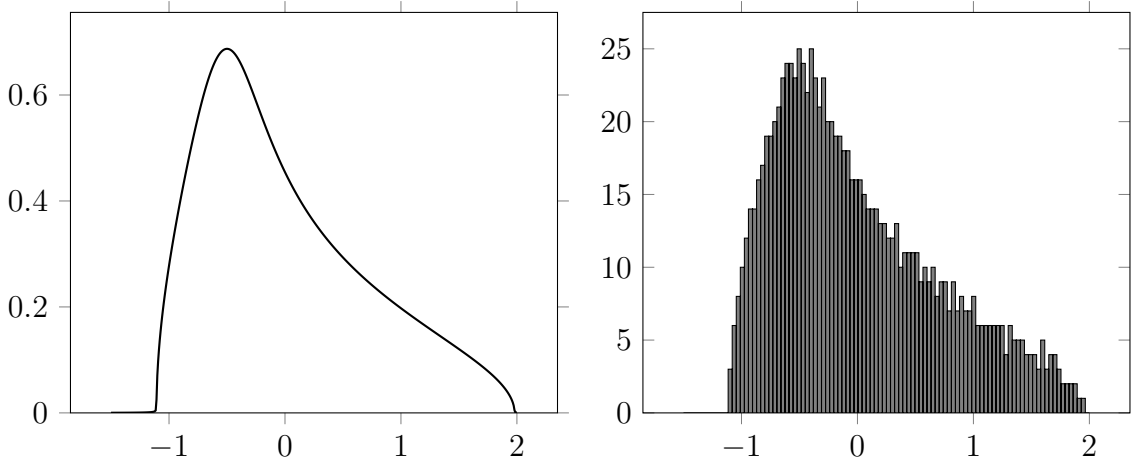

\begin{figure}
\centering{}

\includegraphics{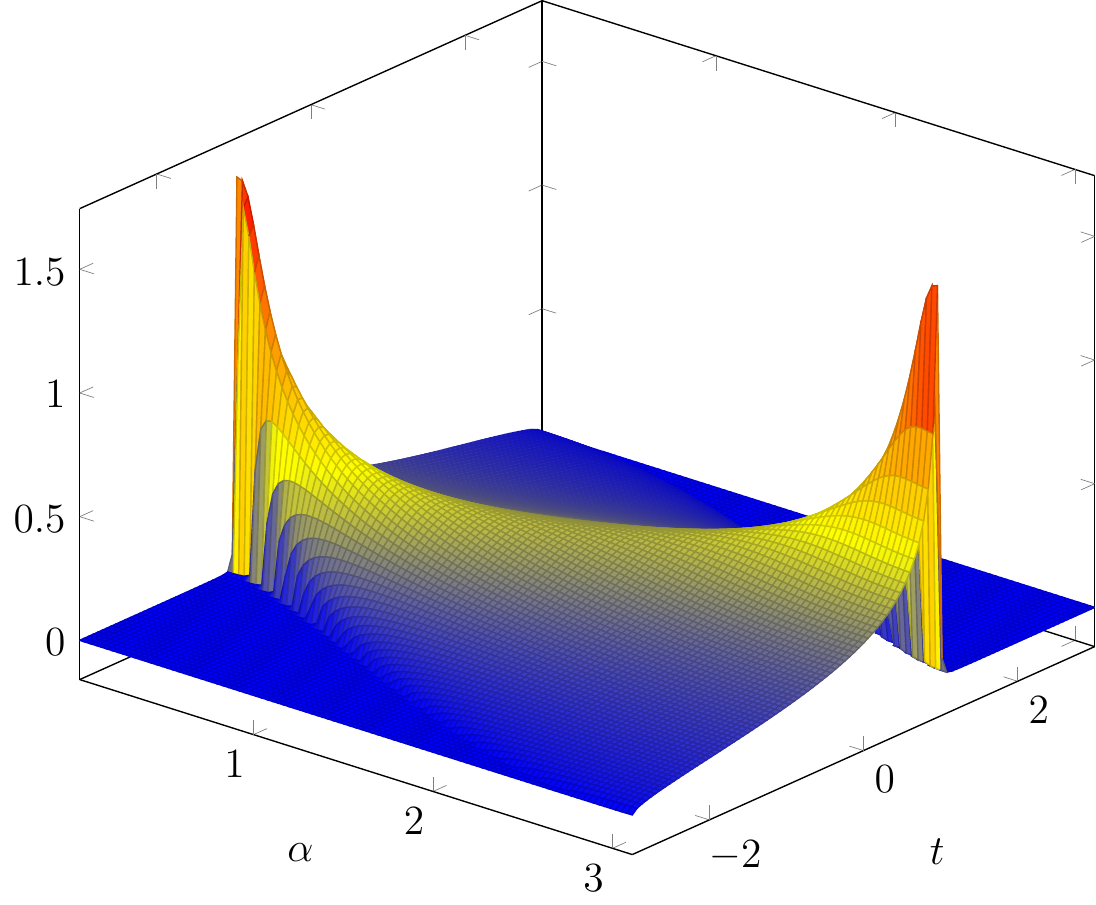}
  \caption{Densities of the generalized free tangent laws depending on $\alpha$}
  \label{fig:tanlaw3D}
\end{figure}

\subsection{Bercovici-Pata bijection}
An important connection between free and classical infinite divisibility was
established by Bercovici and Pata \cite{BercoviciPata:1999} in the form of a bijection $\Lambda$ from the class
of classical infinitely divisible laws to the class of free infinitely divisible laws. 
The easiest way to define the B-P bijection is as follows. Let $\mu$ be a probability
measure in $ID(\ast)$ having  all moments, and consider its sequence $c_n$ of classical cumulants. Then the map  $\Lambda$ can be defined as the
mapping that sends $\mu$ to the probability measure on $\R$ with free cumulants
$c_n$. 

The inverse image of the free tangent law under the Bercovici-Pata bijection
has the following characteristic function
\begin{align*}
  \log\E(\exp(zX))
  &=\sum_{n=1}^\infty (-1)^{n+1}\frac{2^{2n}(2^{2n}-1){B_{2n}}}{(2n)!}\frac{z^{2n}}{(2n)!}
   \intertext{and using Euler's identity
    $\zeta(2n)=\frac{(-1)^{n+1}(2\pi)^{2n}{B_{2n}}}{2(2n)!}$ this is }
&=2\sum_{n=1}^\infty\frac{ \zeta(2n)}{(2n)!}\left(\frac{2z}{\pi}\right)^{2n}-2\sum_{n=1}^\infty \frac{\zeta(2n)}{(2n)!}\left(\frac{z}{\pi}\right)^{2n}
  \intertext{now using the expansion
  $$\sum_{n=1}^\infty\frac{
  \zeta(2n)}{(2n)!}z^{2n}=\sum_{n=1}^\infty\frac{ \sum_{k=1}^\infty
  1/k^{2n}}{(2n)!}z^{2n}=\sum_{k=1}^\infty \sum_{n=1}^\infty\frac{
  1}{(2n)!}\left(\frac{z}{k}\right)^{2n}=\sum_{k=1}^\infty( \cosh(z/k)-1)$$
  we get further
  }
&=2\sum_{n=1}^\infty \left( \cosh\left(\frac{2z}{\pi n}\right)-1\right)-2\sum_{n=1}^\infty \left( \cosh\left(\frac{z}{\pi n}\right)-1\right)
  \\
  &=2\sum_{n\in \N \text{ odd}} \left( \cosh\left(\frac{2z}{\pi n}\right)-1\right).
  \end{align*}
Thus by using $\cosh(it)=\cos(t)$,  we obtain the characteristic function 
\begin{align*}
\E(\exp(itX))=\exp\left[2\sum_{n\in \N \text{ odd}} \left( \cos\left(\frac{2t}{\pi n}\right)-1\right)\right]=\prod_{n\in \N \text{ odd}}\exp\left[2\left( \cos\left(\frac{2t}{\pi n}\right)-1\right)\right]
\end{align*}
Note that $\exp(2 \cos(t)-2)$ is the characteristic function of Skellam distribution $X$, i.e., $$P(X=k)=P(X=-k)=\frac{I_k(2)}{\exp(2)}\text{ for } k\in\N,$$  where $I_n$ is $n$-th modified Bessel function of the first kind see  \cite{Skellam}. 
Hence $\exp\left[2\left( \cos\left(\frac{2t}{\pi n}\right)-1\right)\right]$ is
the characteristic function of the random variable $\frac{2}{n\pi}X$
and we conclude that the classical distribution corresponding to the free tangent law
under the  B-P bijection
is the law of the random variable $\sum_{n\in \N \text{ odd}}\tilde{X}_n$, where $\tilde{X}_n$ are
independent random variables such that   $\tilde{X}_n$ has the same
distribution as $\frac{2}{n\pi}X$, $ n\in \N \text{ odd}.$

\section{Concluding Remarks}
\subsection{Sums of anticommutators}
 Instead of the sums of commutators one can also
consider sums  of the anti-commutators
$\frac{1}{n}\sum_{i<j}^n(\X_i\X_j+\X_j\X_i)$ for sequence of standard free
  semicircular variables.
Contrary to the case $n=1$, where the distribution of the anticommutator $XY+YX$ coincides with
the distribution of the commutator $i(XY-YX)$ \cite[Remark~19.8 (3)]{NicaSpeicher:2006}, this leads to new distributions
for $n\geq3$ and subsequently also in the limit as $n$ tends to infinity.
Indeed the spectrum of
the corresponding matrix $A_n=\left[\begin{smallmatrix}
    0&1\\
    1& 0
  \end{smallmatrix}\right]_n$ consists of the two eigenvalues $\lambda=-1$
and $\lambda=n-1$ with respective multiplicities   $n-1$ and $1$.
Thus in the limit the $r$th cumulant
  is equal to
  \begin{equation*}
    \lim_{n\to \infty}\frac{(-1)^r(n-1)+(n-1)^r}{n^r}=
    \begin{cases}
      1 &\text{for }r\neq 1 \\
      0 & \text{for }r=1.
    \end{cases}
  \end{equation*}

  This corresponds to the Marchenko-Pastur (or free Poisson) distribution.
  Observe that we can reconstruct the Marchenko-Pastur  distribution from
  Theorem \ref{twr:CLTCommutatorsAniticommutators} by passing
  to the limit
  \begin{equation*}
    \lim_{b\to 0 }\frac{\tan(bz)}{b-\sqrt{1-b^2}\tan(bz)}=\lim_{b\to 0}\frac{\frac{\sin(bz)}{bz}}{\frac{\cos(bz)}{z}-\sqrt{1-b^2}\frac{\sin(bz)}{bz}}=\frac{z}{1- z},
  \end{equation*}
  which is the $R$-transform of the free Poisson distribution.

Such  interpolations  have  attracted  some  attention in  connection 
with random matrices. 
 As an application of Corollary \ref{corrandommatrixmodel2} we 
present  an interpolation on the unit circle $w=e^{i\alpha}$, $\alpha\in[0,2\pi)$ 
between 
the Marchenko-Pastur law 
\cite{Pastur:1967} $\alpha=0$, free tangent law $\alpha=\frac{\pi}{2}$ and free zigzag law $\alpha=\frac{\pi}{4}$ in the context of random matrices
 $$ 
    \lim_{M\to\infty}
  \lim_{N\to\infty} \frac{1}{M}X_{N\times NM}\Big[\left[\begin{smallmatrix}
    0&w\\
    \overline{w}& 0
    \end{smallmatrix}\right]_M\otimes P_N\Big]X_{N\times NM}^\ast 
    =Y$$
     $\text{where } R_{Y}(z)=\frac{\tan(z\Im w )}{\Im w-\Re w\tan(z\Im w )}$.
Thus we  are led to measures which might be called  generalized Marchenko-Pastur laws.

\subsection{The trace method for tangent numbers and the Riemann zeta function}

  Propositions~\ref{cor:tangentlaw} and \ref{twr:zigzag}  lead
to another new  fact about the tangent numbers $T_n$, the Euler zigzag numbers $E_n$, 
the Riemann zeta function and the Bernoulli numbers for  even values, namely 
\begin{align*}
 T_{2k-1}&=\lim_{n\to\infty}\frac{(2k-1)!\Tr\big( \left[\begin{smallmatrix}
    0 &i\\
    -i& 0
    \end{smallmatrix}\right]_n^{2k}\big)}{n^{2k}},\text{ }E_k=\lim_{n\to\infty}\frac{k!\Tr\big( \left[\begin{smallmatrix}
    0 &1+i\\
    1-i& 0
    \end{smallmatrix}\right]_n^{k+1}\big)}{2^kn^{k+1}}, 
    \\
    \zeta(2k)&=\lim_{n\to\infty}\frac{\pi ^{2k}\Tr\big( \left[\begin{smallmatrix}
    0 &i\\
    -i& 0
    \end{smallmatrix}\right]_n^{2k}\big)}{ 2n^{2k}(2^{2k}-1)}, B_{2k}=\lim_{n\to\infty}\frac{(2k)!\Tr\big( \left[\begin{smallmatrix}
    0 &i\\
    -i& 0
    \end{smallmatrix}\right]_n^{2k}\big)}{(-1)^{k+1}2^{2k}(2^{2k}-1)n^{2k}}\quad\text{for }k\in\N.
\end{align*}
Approximation of
the values of the Riemann zeta function
for even integers is a popular theme,
see \cite{Williams1971,Apostol1973,Cvijovic2003}.
It would be particularly interesting to obtain approximations for odd integers
as well, but for this one would have to compute the singular values of the
matrix $A_n$.

\bibliographystyle{amsplain}

\providecommand{\bysame}{\leavevmode\hbox to3em{\hrulefill}\thinspace}
\providecommand{\MR}{\relax\ifhmode\unskip\space\fi MR }
\providecommand{\MRhref}[2]{  \href{http://www.ams.org/mathscinet-getitem?mr=#1}{#2}
}
\providecommand{\href}[2]{#2}

\end{document}